\newcommand{\TheTitle}{A Quadratic-Time Algorithm for General Multivariate Polynomial Interpolation} 
\newcommand{\TheAuthors}{M.~Hecht, B.L.~Cheeseman, K.B.~Hoffmann and I.F.~Sbalzarini}
\title{{\TheTitle}\thanks{Submitted to the editors DATE.
\funding{This work was funded by the Max Planck Society, and by the German Federal Ministry of Education and Research (BMBF) under funding code 031L0044.
B.L.C. further acknowledges financial support though a DIGS-BB fellowship, awarded by the DFG-funded Excellence Graduate School of TU Dresden under code DFG-GSC-97.}}}
\author{
  Michael Hecht\footnotemark[1] \thanks{MOSAIC Group, Chair of Scientific Computing for Systems Biology, Faculty of Computer Science, TU Dresden
\& Center for Systems Biology Dresden, Max Planck Institute of Molecular Cell Biology and Genetics, Dresden\
    (\email{hecht@mpi-cbg.de}, \email{cheesema@mpi-cbg.de}, \email{karlhoff@mpi-cbg.de}, \email{ivos@mpi-cbg.de}, \url{http://mosaic.mpi-cbg.de}).}
  \and
  Bevan L.~Cheeseman\footnotemark[1]
  \and
  Karl B.~Hoffmann\footnotemark[1] 
  \and   
   Ivo F.~Sbalzarini\footnotemark[1] 
}
\newcommand{\R}{\mathbb{R}}
\newcommand{\N}{\mathbb{N}}
\newcommand{\Z}{\mathbb{Z}}
\newcommand{\PP}{\mathbb{P}}
\newcommand{\Vc}{\mathcal{V}}
\newcommand{\Ec}{\mathcal{E}}
\newcommand{\Pc}{\mathcal{P}}
\newcommand{\Ic}{\mathcal{I}}
\newcommand{\Oc}{\mathcal{O}}
\newcommand{\ee}{\varepsilon}
\newcommand{\lo}{\longrightarrow}
\newcommand{\li}{\left}
\newcommand{\re}{\right}
\newcommand{\mi}{\,\,\big|\,\,}
\newcommand{\spann}{\mathrm{span}}
\newtheorem{experiment}{Experiment}
\begin{document}

\maketitle

\begin{abstract}
For $m,n \in \N$, $m\geq 1$ and a given function 
$f : \R^m\lo \R$ the \emph{polynomial interpolation problem} (PIP) is to determine a \emph{generic node set} $P \subseteq \R^m$ and the coefficients of the uniquely defined polynomial $Q\in\R[x_1,\dots,x_m]$ in $m$ 
variables of degree $\deg(Q)\leq n \in \N$ that fits $f$ on $P$, i.e., $Q(p) = f(p)$, $\forall\, p \in P$.
We here show that in general, i.e., for arbitrary $m,n \in \N$, $m \geq 1$, 
there exists an algorithm that determines $P$ and computes the $N(\mbox{m,n})=\#P$ coefficients of $Q$ in $\Oc\big(N(\mbox{m,n})^2\big)$ time using $\Oc\big(\mbox{m}N(\mbox{m,n})\big)$ storage, without inverting the occurring \emph{Vandermonde matrix}. We provide such an algorithm, termed PIP-SOLVER, based on a recursive decomposition of the problem and prove its correctness. 
 Since the present approach solves the PIP without matrix inversion, it is computationally more efficient and numerically more robust than previous approaches. 
 We demonstrate this in numerical experiments and compare with previous approaches based on matrix inversion and linear systems solving. 
\end{abstract}

\begin{keywords}  (multivariate) polynomial interpolation, (multivariate) Vandermonde matrix, numerical stability, recursive algorithm, generic nodes, invertibility, efficient solver.
\end{keywords}

\begin{AMS}
  26C99 , 65D05, 65F99, 65L20
\end{AMS}
\section{Introduction}
In scientific computing, the problem of interpolating a function $f : \R^m \lo \R$, $m \in \N$ is ubiquitous. 
Because of their simple differentiation and integration, as well as their pleasant vector space structure, polynomials $Q \in \R[x_1,\dots,x_m]$ in $m$ variables of degree $\deg(Q) \leq n$, 
$m,n \in \N$, are a standard choice as interpolants and are fundamental in ordinary differential equation (ODE) and partial differential equation (PDE) solvers. 
For an overview we refer the reader for instance to \cite{NA2} and  \cite{Na}.  
Thus, the \emph{polynomial interpolation problem} (PIP) is one of the most fundamental numerical problems. We note that a polynomial $Q \in \R[x_1,\dots,x_m]$ in $m$ 
variables of degree $\deg(Q)=n$ possesses $N(m,n):={m+n \choose m}$ monomials and formulate the PIP as follows:

\begin{problem}[PIP]\label{PolyInt}
Given parameters $n,m \in \N$ and a function $f : \R^m \lo \R$. Then the problem is to choose $N(m,n)$ generic nodes 
$P=\{p_1,\dots,p_{N(m,n)}\} \subseteq \R^m$ such that 
there is exactly one polynomial 
$Q \in \R[x_1,\dots,x_m]$ of degree $\deg(Q)\leq n$ that coincides with $f$ on $P$, i.e., $Q(p)=f(p)$ for all $p\in P$, and to determine $Q$ once $P$ has been chosen.
\end{problem}

The function $f : \R^m \lo \R$ is given in the sense that for all $x \in \R^m$ the value of $f(x)$ can be evaluated in $\Oc(1)$ time. Note that if $f$ is a polynomial of degree $\deg(f)\leq n$ then $Q=f$. 
The $1$-dimensional case (\emph{Newton or Lagrange Interpolation}) can be solved by various algorithms for which error estimates and numerical stability are well-known \cite{LIP}. 
However, many data sets in scientific computing are functions of more than one variable and therefore require multivariate polynomial interpolation. 
From the classical closed-form expressions for the $1$-dimensional case, it is known that accuracy and numerical stability of the PIP solution depend on the position of the nodes $P$. 
Specific adequate arrangements of nodes, similar to \emph{Chebyshev nodes} in dimension $1$ \cite{burden}, are also known for the $2$-dimensional square \cite{Erb} and for the 
$d$-dimensional cube \cite{Bos}. An excellent survey of approaches to the general problem is given in \cite{Gasca2000}. 
However, the regularity condition of the general PIP has never been characterized without the strong assumption that 
the nodes are distributed along previously fixed lattices or grids \cite{Bos,Erb,Gasca2000,Chung}.
For dimensions $m>3$, general characterizations become complex and were so far considered infeasible \cite{Gasca2000}. 

Note that due to the famous Theorem of Sard, which was later generalized by Smale \cite{smale}, the set $\Pc_{m,n}$ of all generic node sets with respect to the parameters 
$m,n \in \N$ is a set of second category in the sense of Baire and, therefore, its compliment
$\Pc_{m,n}^C \subseteq \oplus_{k=1}^{N(m,n)}\R^m$  
is a  zero set with respect to the Lebesgue measure. In other words,  
an arbitrary perturbation of a degenerate set $P_0$ will result in a generic set $P$ with probability $1$. However, 
in scientific applications we have to require in addition that Problem \ref{PolyInt} can be solved numerically stably and accurately with respect to $P$.
Even if generic nodes $P$ are given, classical approaches require the inversion of the \emph{multivariate Vandermonde matrix} $V_{m,n} \in \R^{N(m,n)\times N(m,n)}$ in order to compute the coefficients of $Q$.
Therefore, they are limited by the cost of \emph{matrix inversion}. The fastest known algorithm for matrix inversion is the \emph{Coppersmith-Winograd algorithm} \cite{COPPER}, which requires runtime in $\Oc(N(m,n)^{2.3728639})$ in its most efficient version \cite{FAST}. 
However, the \emph{Coppersmith-Winograd algorithm} is rarely used in practice, because it only breaks even for matrices so large that memory problems become prevalent on modern hardware \cite{robinson}.
The algorithm that is mostly used in practice is the \emph{Strassen algorithm} \cite{strassen}, which runs in $\Oc(N(m,n)^{2.807355})$. 
Alternatively,  one can solve the corresponding system of linear equations by \emph{Gaussian elimination} in $\Oc(N(m,n)^3)$. All of these approaches require $\Oc(N(m,n)^2)$ storage to hold the Vandermonde matrix. 
Moreover, the condition number of $V_{m,n}$ limits the numerical robustness and accuracy with which these approaches can solve the PIP. Numerically, classical approaches thus become intractable with increasing $N(m,n)$.

In contrast to those classical approaches, we here show that it is possible to recursively decompose the problem into sub-problems of lesser degree or lesser dimension, until we reach only linear or
$1$-dimensional sub-problems that can efficiently and robustly be solved, {\em independent} of the condition number of the Vandermonde matrix.
Consequently, we solve the PIP numerically accurately and efficiently without inverting the Vandermonde matrix. The resulting algorithm, PIP-SOLVER, has a runtime in $\Oc(N(m,n)^2)$ and requires storage in $\Oc(mN(m,n))$. 
Both time and space complexity are therefore lower than those of previous approaches, and we avoid the numerical inaccuracies and instabilities that occur during matrix inversion.

\subsection{Outline}  We first present the main results of this article in section \ref{main}. Afterwards, 
in section \ref{FormVander}, we formally introduce general \emph{Vandermonde systems} and deduce a notion of generic node sets $P$, implying that the nodes are not allowed to lie on 
a hypersurface of degree $\leq n$, where $n = \deg(Q)$ is the degree of the polynomial one wants to determine. Furthermore, we assert how the 
Vandermonde systems can be used to solve Problem \ref{PolyInt} over generic node sets. In section \ref{Newt} we consider the special $1$-dimensional case of the PIP also known as 
\emph{Newton or Lagrange Interpolation} and review  
 classical results. Moreover, we study the special linear case $m \in \N,n=1$ in section \ref{linear}. In section \ref{ALGCurv} we introduce some basic concepts of the theory of algebraic curves and state 
B\'{e}zout's Theorem, which relates the number of intersections of algebraic curves to their degrees. The key result is that two algebraic curves $V$, $W$ without common components intersect in at most 
$\deg(V)\deg(W)$ points, counted with multiplicity. In section \ref{2D} we use this fact to determine configurations of nodes whose intersections with specific algebraic curves
guarantees genericity in dimension $m=2$. In section \ref{Decomp} we prove that the general PIP can be split into two sub-problems and in section \ref{Experiments} we use this splitting  
to formulate a PIP-SOLVER, proving the main theorem. Moreover, in section \ref{EX} we  
present several numerical experiments that illustrate our results. Afterwards, in section \ref{APP},  we mention non-obvious applications and finally, in section \ref{Conc}, we discuss remaining problems and possible generalizations of our approach.

\section{Main Result}\label{main}

The main result of this article is that both problems, namely finding a generic set of nodes $P$ and computing the interpolating polynomial $Q$, can be split into sub-problems
with parameters \mbox{$(m-1,n)$} and \mbox{$(m,n-1)$}. Consequently, we can  recursively decompose the original PIP to \emph{linear} or \emph{1-dimensional} sub-PIPs. 
The combination of the solutions of these sub-problems then yields a solution of the original PIP. Since the linear and $1$-dimensional sub-PIPs can be solved in $\Oc(m^2)$ and $\Oc(n^2)$, respectively, we obtain:

 \begin{theorem}[Main Result] Let $m,n \in \N$ and $f : \R^m \lo \R$ be a given function. Then there exists an algorithm with runtime complexity $\Oc\big(N(\mbox{m,n})^2\big)$ requiring storage $\Oc(mN(\mbox{m,n}))$ 
 to compute a generic set $P \subseteq \R^m$ and a polynomial $Q \in \R[x_1,\dots,x_m]$ with $\deg (Q) \leq n$,  such that $Q$ is the unique solution of the PIP with respect to $P$ and $f$.
 \end{theorem}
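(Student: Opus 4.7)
The plan is to establish the theorem by setting up a recursion on the parameter pair $(m,n)$, grounded on two easy base cases, and driven by the Pascal-type identity
\[
N(m,n) \;=\; N(m-1,n) + N(m,n-1),
\]
which is what makes the decomposition into the sub-problems announced in Sections~\ref{Decomp}--\ref{Experiments} dimensionally consistent. First I would dispose of the base cases. For $m=1$ the PIP is ordinary Newton/Lagrange interpolation (Section~\ref{Newt}), which produces a generic node set (any $n+1$ distinct points on $\R$) and the interpolating polynomial in $\Oc(n^2)$ time and $\Oc(n)$ space. For $n=1$ the linear case (Section~\ref{linear}) asks for $m+1$ nodes in general affine position in $\R^m$ and the corresponding affine interpolant; this can be done in $\Oc(m^2)$ time and $\Oc(m^2) = \Oc(m N(m,1))$ space. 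Together these cover the termination of any recursion on $(m,n)$.

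Next I would invoke the splitting result of Section~\ref{Decomp}: a PIP with parameters $(m,n)$ is equivalent to solving one PIP with parameters $(m-1,n)$ on a hyperplane $H \subset \R^m$ together with a PIP with parameters $(m,n-1)$ on the complement, the coupling being governed by the fact that a polynomial of degree $\le n$ restricted to $H$ lies in the $(m-1,n)$-class, while $Q - Q|_H$ is divisible by the defining linear form of $H$ and hence corresponds to a polynomial of degree $\le n-1$ in $m$ variables. This gives at once the node-count identity above and defines PIP-SOLVER recursively: produce generic nodes and the solution for the two smaller sub-PIPs, then glue. The correctness proof amounts to verifying (a) that the union of the two node sets, transported back to $\R^m$, is generic in the sense of Section~\ref{FormVander} (i.e.\ does not lie on a hypersurface of degree $\le n$), and (b) that the glued polynomial is the unique element of $\R[x_1,\dots,x_m]_{\le n}$ interpolating $f$ on that union.

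For the complexity analysis, letting $T(m,n)$ denote the runtime of PIP-SOLVER, the combining step (evaluating the sub-polynomial on the $(m,n-1)$-nodes and determining the coefficients of the divisible correction) can be carried out in $\Oc\!\bigl(N(m-1,n)\,N(m,n-1)\bigr)$ operations, yielding the recurrence
\[
T(m,n) \;\le\; T(m-1,n) + T(m,n-1) + c\,N(m-1,n)\,N(m,n-1).
\]
Substituting the ansatz $T(m,n) \le C\,N(m,n)^2$ and using $N(m,n)^2 = \bigl(N(m-1,n)+N(m,n-1)\bigr)^2$ closes the induction for $C$ large enough, giving the claimed $\Oc(N(m,n)^2)$ bound. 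Storage is controlled by the same recursion: one holds $\Oc(m)$ coordinates for each of the $N(m,n)$ nodes and $\Oc(N(m,n))$ coefficients, while sub-problem workspace can be reused in a DFS traversal, giving the $\Oc(m\,N(m,n))$ bound.

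The hard part is step (a) above: guaranteeing that the glued node set is truly generic and that the combining step admits an $\Oc(N(m-1,n)\,N(m,n-1))$ implementation that avoids forming or inverting the Vandermonde matrix $V_{m,n}$. This is precisely what the algebraic-geometry machinery of Sections~\ref{ALGCurv}--\ref{2D}, together with B\'{e}zout's theorem, is set up to provide: by choosing the auxiliary hypersurfaces so that their intersection multiplicities with the candidate node configuration meet the degree bound, one certifies that no polynomial of degree $\le n$ vanishes on the glued set unless it is identically zero, and the divisibility structure of the splitting lets the coefficients be read off triangularly rather than by inversion. Once these two facts are in place, the theorem follows by induction on $m+n$ anchored at the two base cases.
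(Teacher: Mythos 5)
Your proposal gets the overall architecture right: Pascal's identity $N(m,n)=N(m-1,n)+N(m,n-1)$, the base cases $m=1$ and $n=1$, the hyperplane decomposition into an $(m-1,n)$ sub-problem on $H$ and an $(m,n-1)$ sub-problem off $H$, the recurrence $T(m,n)\le T(m-1,n)+T(m,n-1)+\Oc\!\left(N(m-1,n)N(m,n-1)\right)$, and closing it with the ansatz $T\le CN^2$ via $N(m,n)^2 = \left(N(m-1,n)+N(m,n-1)\right)^2$. That is the paper's argument. However, the route you sketch to the key genericity lemma (your step (a)) is not the one that works. You appeal to B\'ezout's theorem and ``intersection multiplicities with auxiliary hypersurfaces,'' but the B\'ezout statement developed in Section~\ref{ALGCurv} is specifically for plane curves, i.e.\ $m=2$; a multi-dimensional analogue controlling intersections of the glued node set with candidate degree-$\le n$ hypersurfaces is explicitly flagged in the paper as an open generalization, not something it proves. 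What actually proves Theorem~\ref{GN} is a short, elementary divisibility argument: if $Q$ with $\deg(Q)\le n$ vanishes on the glued set $P=P_1\cup P_2$, then its restriction to $H$ vanishes on the generic $P_1\subseteq H$, hence vanishes identically on $H$, hence $Q$ is divisible by the degree-one form $Q_H$ defining $H$; genericity of $P_2\subseteq\R^m\setminus H$ with respect to $(m,n-1)$ then forces the degree-$(n-1)$ quotient to vanish, so $Q\equiv 0$. B\'ezout and the curve machinery are used only for the optional alternative 2D constructions (Remark~\ref{Q}), not in the recursion itself.

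Relatedly, the $\Oc\!\left(N(m-1,n)N(m,n-1)\right)$ cost of the combining step is not automatic and needs a small arithmetic observation you leave implicit: the bottleneck is forming the product $Q_H Q_2$, which costs $\Oc\!\left((m+1)N(m,n-1)\right)$, and the paper converts this into the desired form by noting $m+1\le m(m+1)=2N(m-1,2)\le 2N(m-1,n)$ for $n\ge 2$. With the elementary divisibility proof of Theorem~\ref{GN} and that bound substituted for the B\'ezout appeal, your induction closes exactly as the paper's does, including the storage bound $\Oc(mN(m,n))$ by an analogous recursion on node coordinates plus coefficients with reuse of sub-problem workspace.
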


Note that the theoretical lower bound for \emph {inversion} of an $N\times N$ matrix is given by \emph{matrix multiplication}, which has complexity at least $\Oc(N^2\log(N))$ \cite{tveit}. 
Thus, the statement above implies a possibility of overcoming this barrier by avoiding the inversion. 

\section{Systems of Polynomials} \label{FormVander}

We follow  \cite{MultiVander}  to introduce some basic notions and helpful notations. 
While a polynomial $Q \in \R[x_1,\dots,x_m]$ of degree $\deg(Q)=n$ 
possesses $N(m,n):={m+n \choose m}$ monomials, the number of monomials of degree $k$ is given by  $M(m,k):={m+k \choose m} - {m+k-1 \choose m}$.
We  enumerate the coefficients $c_0,\dots,c_{N(m,n)}$ of 
$Q$ such that 
\begin{align}
 Q(x) & =  c_0 + c_1 x_1  + \cdots + c_mx_{m}+ c_{m+1}x_1^2+c_{m+2}x_1x_2 + \cdots  + c_{2m}x_1x_m \nonumber \\ 
 & + c_{2m+1}x_2^2+ \cdots  +  c_{M(m,n-1)+1}x_1^n   + c_{M(m,n-1)+2}x_1^{n-1}x_2 +\cdots\nonumber \\ 
 & +  c_{N(m,n)-1}x_{m-1}x_m^{n-1} + c_{N(m,n)-1}x_m^n\,. \label{C}
\end{align}
 We assume that $0 \in \N$ and denote by $I=(i_1,\dots,i_m) \in \N^m$ 
a multi-index of order  $\#I:= \sum_{k=1}^mi_k$. The multi-index is used to address the monomials of a multivariate polynomial. 
For a vector $x=(x_1,\dots,x_m)$ and $I \in \N^m$ we define 
\begin{equation*}
\PP_I(x):=  x_1^{i_1}\cdots x_m^{i_m}
\end{equation*}
and by ordering the $M(m,k)$, we define multi-indices $I \in \N^m$ of order  $\#I =k$ with respect to lexicographical order, i.e., 
$I_1 =(k,0,\dots,0)$, $I_2=(k-1,1,0\dots,0)$,$\dots$, $I_{M(m,k)} =(0,\dots,0,k)$. The $k$-th \emph{symmetric power} 
$x^{\odot k }=(x_1^{\odot k },\dots,x_{M(m,k)}^{\odot k }) \in \R^{M(m,k)}$ is defined  by the entries 
\begin{equation}
x^{\odot k }_i:= \PP_{I_i}(x) \,, \quad i=1,\dots,M(m,k)\,.
\end{equation}
Thus, $x^{\odot 0 }_i =1$, $x^{\odot 1 }_i =x_i$.  
\begin{definition} Given parameters $n,m \in \N$. For a set $P=\{p_1,\dots,p_{N(m,n)}\} \subseteq \R^m$ of nodes, 
with $p_i = (p_{1,i}, \dots,p_{m,i})$, 
we define the \emph{multivariate Vandermonde matrix} $V_{m,n}(P)$ by 
$$ V_{m,n}(P) = \li(\begin{array}{ccccc}
                       1   & p_1   & p_1^{\odot 2}  & \cdots      & p_1^{\odot n}  \\
                       1  & p_2 &p_2^{\odot 2} & \cdots & p_2^{\odot n}\\
                       1 & p_3 & p_3^{\odot 2}& \cdots & p_3^{\odot n}\\
                       1 & \vdots &\vdots & \ddots& \vdots \\
                       1 & p_{N(m,n)}  & p_{N(m,n)}^{\odot 2} & \cdots & p_{N(m,n)}^{\odot n}\\
                      \end{array}\re)
                      \,.$$ 
                      \end{definition}
We call a set $P =\{p_1,\dots,p_{N(m,n)}\} \subseteq \R^m$ of nodes \emph{generic} if and only if the  Vandermonde matrix $V_{m,n}(P)$
is regular. Thus, the set of all generic node sets is given by 
$\Pc_{m,n}=\li\{ P \subseteq \R^m \mi \det\big(V_{m,n}(P)\big)\not = 0\re\}$, which is open in $\R^m$ since $P \mapsto \det\big(V_{m,n}(P)\big)$ is a continuous function. Given a real-valued function $f$ on $\R^m$ and assuming that there exist generic nodes $P=\{p_1,\dots p_{N(m,n)}\} \subseteq \R^m$, 
then the linear system of equations $V_{m,n}(P)x = F$, with
$$  x=\big(x_1,\dots,x_{N(m,n)}\big)^T\,\,\,\text{and}\,\,\, F=\big(f(p_1),\dots,f(p_{N(m,n)})\big)^T $$ 
possesses the unique solution $s=(s_1,\dots,s_{N(m,n)})\in \R^{N(m,n)}$ given by
$$ s = V_{m,n}(P)^{-1}F \,.$$ 
Thus, by setting $c_i:=s_i$ for the coefficients of $Q\in\R[x_1,\dots,x_m]$, enumerated as in \eqref{C}, we have uniquely determined the solution of Problem \ref{PolyInt}.
The essential difficulty herein lies in finding a good generic node set by solving the following:
\begin{problem}
Given parameters $n,m \in \N$, choose $N(m,n)$ generic nodes 
$P=\{p_1,\dots,p_{N(m,n)}\} \subseteq \R^m$ such that inversion of the Vandermonde matrix  $V_{m,n}(P)$ is numerically stable and accurate.
\label{VV}
\end{problem}
Moreover, our observations prove the following well-known result first mentioned in  \cite{Chung} and again in \cite{MultiVander}: 

\begin{theorem}\label{generic} Let $m,n \in \N$  and $P \subseteq \R^m$, $\# P = N(m,n)$. The Vandermonde matrix $V_{m,n}(P)$
is regular if and only if 
the  nodes $P$  
do not belong to a common algebraic hypersurface of
degree $\leq n$, i.e., if there exists no polynomial $Q \in \R[x_1,\dots,x_m]$, $Q \not =0$ of degree $\deg(Q) \leq n$  such that $ Q(p) = 0 $ for all $p \in P$.  
\end{theorem}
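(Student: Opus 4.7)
The plan is to recognize that the Vandermonde matrix $V_{m,n}(P)$ is precisely the matrix of the linear map sending a coefficient vector to the corresponding vector of polynomial evaluations, so its regularity is equivalent to a uniqueness statement for vanishing polynomials of degree $\leq n$. Concretely, if $c=(c_0,\dots,c_{N(m,n)-1})^T \in \R^{N(m,n)}$ is the coefficient vector of a polynomial $Q \in \R[x_1,\dots,x_m]$ of degree $\leq n$ enumerated as in \eqref{C}, then the $i$-th row $(1,p_i,p_i^{\odot 2},\dots,p_i^{\odot n})$ of $V_{m,n}(P)$ lists the monomial evaluations $\PP_I(p_i)$ in the same order in which the coefficients $c_j$ appear in \eqref{C}. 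Consequently,
\[ V_{m,n}(P)\, c \,=\, \bigl(Q(p_1),\dots,Q(p_{N(m,n)})\bigr)^T\,. \]

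With this identification in hand, both directions of the equivalence follow immediately. If $V_{m,n}(P)$ is singular, I would pick a nonzero $c$ in its kernel; the corresponding polynomial $Q$ is nonzero, satisfies $\deg(Q)\leq n$, and vanishes on every $p \in P$, so the zero locus $\{Q=0\}$ is a common algebraic hypersurface of degree $\leq n$ containing $P$. Conversely, if some nonzero $Q\in\R[x_1,\dots,x_m]$ with $\deg(Q)\leq n$ vanishes on $P$, its coefficient vector $c$ is nonzero and lies in the kernel of $V_{m,n}(P)$ by the displayed identity, hence $V_{m,n}(P)$ is not regular. Taking contrapositives yields the stated "iff".

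The main (and essentially only) obstacle is the bookkeeping that aligns the lexicographic enumeration of coefficients in \eqref{C} with the block structure $(1,p,p^{\odot 2},\dots,p^{\odot n})$ used in the definition of $V_{m,n}(P)$. Since both orderings are fixed by the same lexicographic convention on multi-indices $I$ with $\#I = k$ for $k=0,\dots,n$, this verification is purely combinatorial, and no genuinely new idea beyond the duality between coefficient vectors and monomial evaluations is required. Once the indices are matched, the argument is a direct translation between the kernel of $V_{m,n}(P)$ and the existence of a nontrivial $\R$-linear combination of the monomials $\PP_I$ with $\#I \leq n$ that vanishes at every node, which is exactly the statement that $P$ lies on a common algebraic hypersurface of degree $\leq n$.
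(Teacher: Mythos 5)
Your proposal is correct and matches the paper's (implicit) argument: the paper declares the result to follow from "our observations" about the Vandermonde system $V_{m,n}(P)x = F$, and the Remark immediately after the theorem restates it exactly as you do, namely that regularity of $V_{m,n}(P)$ is equivalent to triviality of the kernel of the map $c \mapsto (Q(p_1),\dots,Q(p_{N(m,n)}))^T$, which is equivalent to no nonzero $Q$ of degree $\leq n$ vanishing on all of $P$. You simply spell out the bookkeeping that the paper leaves tacit.
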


\begin{remark} In other words: Theorem \ref{generic} says that $P$ is genric if and only if
the homogeneous Vandermonde problem 
$$V_{m,n}(P)x = 0$$
possesses no non-trivial solution, which is equivalent to the fact that there is no hypersurface $V$ of degree $\deg(V)\leq n$ with $P \subseteq V$.  
\end{remark}

\section{Special Cases}\label{SC}
We solve the general PIP by recursively decomposing it into linear or $1$-dimensional sub-PIPs. How to solve these sub-PIPs is reviewed in this section.
To this end, we define the following central concepts:

\begin{definition}
Let $\tau : \R^m \lo \R^m$ be given by $\tau(x) = Ax +b $, where $A \in \R^{m\times m}$ is a full-rank matrix, i.e. $\mathrm{rank}(A) =m$, and $b \in \R^m$. Then we call 
$\tau$ an \emph{affine transformation} on $\R^m$. 
\end{definition}

\begin{definition}\label{proj}
For every ordered tuple of integers $i_1,\dots,i_k \in \N$, $i_{q} < i_p$ if $1 \leq q <p \leq k$, we consider  
$$H_{i_1,\dots, i_k}=\li\{(x_1,\dots,x_n) \in \R^m \mi x_j = 0 \,\, \text{if}\,\, j \not \in \{i_1,\dots,i_k\}\re\}$$ the $k$-dimensional hyperplanes spanned by the 
$i_1,\dots,i_k$-th coordinates. We  denote by $ \pi_{i_1,\dots, i_k} : \R^m \lo H_{i_1,\dots,i_k}$ and $i_{i_1,\dots, i_k} : \R^k \hookrightarrow  \R^m$, with $i_{i_1,\dots, i_k}(\R^k)=H_{i_1,\dots,i_k}$
the natural projections and embeddings. 
We denote by
$\pi_{i_1,\dots,i_k}^* : \R[x_1,\dots,x_m] \lo \R[x_{i_1},\dots x_{i_k}]$, $i^*_{i_1,\dots, i_k} : \R[y_{1},\dots y_{k}] \hookrightarrow \R[x_1,\dots,x_m]$ the induced projections and embeddings 
on the polynomial ring. 
\end{definition}

\begin{definition}\label{Hxi} Let $m,n \in \mathbb{N}$, 
$\xi_1,\dots,\xi_m \in \R^m$ an orthonormal frame (i.e., $\li<\xi_i,\xi_j\re>=\delta_{ij}$, $ \forall 1\leq i,j\leq m$, where $\delta_{ij}$ denotes the Kronecker symbol), and $b \in \R^m$. 
For $\Ic \subseteq \{1,\dots,m\}$
we consider the hyperplane 
$$H_{\Ic,\xi,b}:= \li\{x \in \R^m \mi \li<x-b,\xi_i\re>=0\,, \forall \, i \in \Ic \re\}\,.$$ 
Given a function $f : \R^m \lo \R$ we say that a set of nodes $P \subseteq H_\Ic$ and a polynomial $Q \in \R[x_1,\dots,x_m]$ with $\deg(Q) \leq n$ 
\emph{solve the PIP} with respect to $n,k=\dim H_\Ic$ and $f$ on $H_\Ic$ if and only if $Q(p) =f(p)$ for all $p \in P$ and whenever there is  a $Q'\in \R[x_1,\dots,x_m]$ with $\deg(Q')\leq n $ and $Q'(p)=f(p)$
for all $p \in P$ then $Q'(x)=Q(x)$ for all $x \in H_\Ic$.
\end{definition}

\begin{definition}Let  $H \subseteq \R^m$ be a hyperplane of dimension $k\in \N$ and $\tau : \R^m\lo \R^m$ an affine transformation such that 
$\tau(H) =H_{1,\dots,k}$.
Then we denote by
$$\tau^*: \R[x_1,\dots,x_m] \lo \R[x_1,\dots,x_m]$$
the \emph{induced transformation} on the polynomial ring defined over the monomials as:
$$ \tau^*(x_i) = \eta_1x_1+\cdots + \eta_mx_{m}\quad \text{with} \quad \eta=(\eta_1,\dots,\eta_m) = \tau(e_i)\,,$$
where $e_1,\dots,e_m$ denotes the standard basis of $\R^m$.
\end{definition}

\begin{lemma}\label{TT} Let $m,n \in \N$ and $P$ be a generic node set with respect to $m,n$. Further let $\tau : \R^m \lo \R^m$, $\tau(x) =Ax +b$ be an affine transformation. 
Then, $\tau(P)$ is also a generic set with respect to $m,n$.
\end{lemma}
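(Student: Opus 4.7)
The plan is to argue by contrapositive through Theorem \ref{generic}, which characterizes genericity as the absence of a nonzero polynomial of degree $\leq n$ vanishing on the node set. First I would assume $\tau(P)$ is not generic and use Theorem \ref{generic} to extract a nonzero $Q' \in \R[x_1,\dots,x_m]$ with $\deg(Q')\leq n$ that vanishes on $\tau(P)$; then I would manufacture from $Q'$ a corresponding witness on $P$, contradicting the hypothesis that $P$ is generic.

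The natural candidate is the pullback $Q(x) := Q'(\tau(x)) = Q'(Ax+b)$. By construction $Q(p)=Q'(\tau(p))=0$ for every $p \in P$, so the crux reduces to checking that $Q$ lies in the correct function space, namely $\deg(Q)\leq n$ and $Q\not\equiv 0$. Invoking Theorem \ref{generic} in the reverse direction on $P$ then produces the contradiction.

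Both remaining verifications are routine. The degree bound $\deg(Q)\leq\deg(Q')$ holds because each coordinate function of $\tau$ is affine-linear, so substituting $\tau$ into a monomial of total degree $k$ produces a polynomial in $x$ of degree at most $k$; summing over the monomials of $Q'$ yields $\deg(Q)\leq n$. Nonvanishing of $Q$ is the one place where the full-rank hypothesis on $A$ is actually used: it makes $\tau$ a bijection on $\R^m$, so if $Q\equiv 0$ then $Q' = Q\circ \tau^{-1}\equiv 0$, contradicting the choice of $Q'$.

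No step is a real obstacle; the entire argument is a pullback of hypersurfaces along the invertible affine map $\tau$. If one preferred a purely linear-algebraic route that sidesteps Theorem \ref{generic}, one could instead exhibit a factorization $V_{m,n}(\tau(P)) = V_{m,n}(P)\cdot M_\tau$, where $M_\tau$ encodes the expansion of each monomial $\PP_I(Ax+b)$ in the monomial basis of polynomials of degree $\leq n$. Since $\tau$ is bijective, $M_\tau$ is regular, and nonsingularity of the Vandermonde then transfers by multiplicativity of the determinant.
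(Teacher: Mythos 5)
Your argument is correct and matches the paper's proof essentially verbatim: the paper likewise takes a nonzero polynomial $Q_0$ of degree $\leq n$ vanishing on $\tau(P)$, pulls it back via $\tau^*$ (i.e.\ $Q_0 \circ \tau$) to a nonzero polynomial of degree $\leq n$ vanishing on $P$, and derives a contradiction with the genericity of $P$ via Theorem \ref{generic}. Your two extra remarks --- the explicit observation that full rank of $A$ is what guarantees nonvanishing of the pullback, and the alternative factorization $V_{m,n}(\tau(P)) = V_{m,n}(P)\,M_\tau$ --- are both sound and supply detail the paper leaves implicit.
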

\begin{proof} Assume there is  a polynomial $Q_0 \in \R[x_1,\dots,x_m]$ with $\deg (Q_0) \leq n$ such that $Q_0(\tau(P))=0$. Then setting $Q_1:=\tau^*(Q_0)$ yields a non-zero polynomial with $\deg(Q_1)\leq n$ and
$$Q_1(P) = \tau^*(Q_0)(P)= Q_0(\tau(P))=0\,, $$
which contradits that $P$ is generic. Hence, $\tau(P)$ must be generic.
\end{proof}

\subsection{1-dimensional Interpolation}\label{Newt}
We review the classical results in the special case of dimension $m=1$ and refer the reader to \cite{LIP} for a more detailed discussion of $1$-dimensional interpolation. 
In one dimension, the Vandermonde matrix $V_{1,n}(P)$ takes its classical form 
$$ V_{1,n}(P) = \li(\begin{array}{cccc}
                       1   & p_1    & \cdots      & p_1^{ n}  \\
                       \vdots & \vdots & \ddots& \vdots \\
                       1 & p_{n+1}   & \cdots & p_{n+1}^{n}\\
                      \end{array}\re)
                      \,.$$
For the matrix $V_{1,n}(P)$ to be regular, the nodes $p_1,\dots,p_{n+1}$ have to be pairwise different. Theorem \ref{generic} implies that this is also a sufficient condition for the nodes $P$ to be generic.
In light of this fact, we consider the \emph{Lagrange polynomials} 
$$L_j(x) = \prod_{h=1, h\not=i}^{n+1} \frac{x-p_h}{p_j-p_h} \,, \quad j =1,\dots,n$$ fulfilling  $L_{j}(p_k) =\delta_{jk}$, where $\delta_{jk}$ is the Kronecker symbol. 
Note that $V_{1,n}$ induces a linear bijection  
$\varphi : \R^{n+1} \lo \R_n[x]$, where $\R_n[x]$ is the vector space of real polynomials $Q \in \R[x]$ with $\deg(Q) \leq n$ and $\varphi(Q)$ is given by the polynomial with coefficients $V_{1,n}s$.  
Hence, if we use the $L_j$ instead of $\{1,x,x^2,\dots,x^n\}$ as a basis of $\R_n[x]$, the transformed Vandermonde matrix 
becomes the identity matrix. Thus, the solution of the PIP in dimension $m=1$ is given by choosing pairwise different nodes $P$ and setting
\begin{equation*}
Q(x) = \sum_{j=1}^{n+1}f(p_j)L_j(x)\,.
\end{equation*}
The coefficients of $Q$ are given by:
\begin{equation}\label{Form}
 c_i = Q^{(i)}(0)= \sum_{j=1}^{n+1} f(p_j)L_j^{(i)}(0)\,, \quad i =0,\dots,n \,,
 \end{equation}
where $Q^{(i)},L_i^{(i)}$ denotes the $i$-th derivative of $Q$ and $L_i$, respectively. Thus, starting from an analytical expression for $L_j^{(i)}(0)$ we can establish an analytical formula 
for the $c_i$ that requires computing $n$ derivatives of 
$n+1$ polynomials, therefore incurring a computation cost of $\Oc(n^2)$ overall. Consequently, the $1$-dimensional PIP can be solved as follows:

\begin{proposition}\label{1D} Let $m,n,j \in \mathbb{N}$, $1\leq j\leq m$, $\xi_1,\dots,\xi_m \in \R^m$ an orthonormal frame, $b \in \R^m$, and  $\Ic =\{1,\dots,m\}\setminus\{j\} $ such that $H_\Ic:=H_{\Ic,\xi,b}$ is a 
line. Further let $f : \R^m \lo \R$ be a computable function. 
\begin{enumerate} 
 \item[i)] If $\xi_1,\dots,\xi_m $ is the standard basis and $b_j =0$, then there exists an algorithm that solves the PIP with respect to $(1,n)$ and $f$ on $H_\Ic$ in $\Oc(n^2)$. 
 \item[ii)] If $\xi_1,\dots,\xi_m $ is an arbitrary frame, then there exists an algorithm that solves the PIP with respect to $(1,n)$ and $f$ on $H_\Ic$ in $\Oc(mnN(m,n)+n^2)$.  
\end{enumerate}
\label{PIP1D}
\end{proposition}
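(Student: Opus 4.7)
The plan is to reduce both cases to the classical 1-dimensional Lagrange formula (\ref{Form}) reviewed above, and in (ii) to additionally account for the cost of re-expressing the univariate interpolant as a member of $\R[x_1,\dots,x_m]$ in the standard monomial basis.

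For part (i), the hypothesis $\xi_i=e_i$ together with $b_j=0$ forces $H_\Ic = \{\,b+te_j : t\in\R\,\}$, a line parallel to the $j$-th coordinate axis on which the parameter $t$ coincides with the coordinate $x_j$. I would choose $n+1$ pairwise distinct scalars $t_1,\dots,t_{n+1}$, form the nodes $p_i := b+t_ie_j$, evaluate $f(p_i)$, and apply the Lagrange formula (\ref{Form}) in the single variable $x_j$ to obtain a univariate polynomial $q(x_j)$ with $q(t_i)=f(p_i)$; together with the evaluations of $f$ this costs $\Oc(n^2)$. Defining $Q(x_1,\dots,x_m)$ as the polynomial $q(x_j)\in\R[x_1,\dots,x_m]$ (depending only on $x_j$) then solves the PIP on $H_\Ic$: uniqueness in the sense of Definition \ref{Hxi} follows because the restriction to $H_\Ic$ of any competing $Q'$ of degree $\leq n$ is a univariate polynomial of degree $\leq n$ in $t$ that agrees with $q$ at the $n+1$ distinct values $t_i$, and hence equals $q$ identically.

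For part (ii) the same construction is applied after parametrizing $H_\Ic = \{\,b+t\xi_j : t\in\R\,\}$. I would pick distinct $t_i$, form $p_i := b+t_i\xi_j$, evaluate $f(p_i)$, and run the 1-dimensional Lagrange procedure in the parameter $t$ to obtain coefficients $c_0,\dots,c_n$ of $q(t)=\sum_{k=0}^n c_k t^k$ in $\Oc(n^2)$ time. The additional step is to rewrite $Q(x) := q\bigl(\langle x-b,\xi_j\rangle\bigr)$ in the standard monomial basis of $\R[x_1,\dots,x_m]$. With $\ell(x) := \langle x-b,\xi_j\rangle$, an affine form with at most $m+1$ nonzero terms, I would compute the powers $\ell^0,\ell^1,\dots,\ell^n$ iteratively via $\ell^{k+1}=\ell\cdot\ell^k$ and accumulate $Q = \sum_{k=0}^n c_k\ell^k$. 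Uniqueness on $H_\Ic$ is established by the same restriction-to-a-line argument as in (i), or equivalently by appealing to Lemma \ref{TT} with the orthogonal affine transformation sending $\xi_i\mapsto e_i$ and $b\mapsto 0$, reducing (ii) to (i).

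The main obstacle is the bookkeeping of the expansion complexity in (ii). Since $\ell^k$ has at most $N(m,k)$ monomials and $\ell$ has $m+1$ terms, the product $\ell\cdot\ell^k$ requires $\Oc\bigl((m+1)N(m,k)\bigr)$ elementary monomial operations; addressing the correct target multi-index in the table of at most $N(m,n)$ coefficients adds a further factor of $\Oc(m)$. Summing over $k$ and using the hockey-stick identity $\sum_{k=0}^{n-1}N(m,k)=N(m+1,n-1)=\tfrac{n}{m+1}N(m,n)$ yields total expansion cost $\Oc\bigl(mnN(m,n)\bigr)$, which combined with the $\Oc(n^2)$ from the 1D Lagrange step produces the claimed $\Oc\bigl(mnN(m,n)+n^2\bigr)$ bound.
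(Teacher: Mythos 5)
Your proof is correct and shares the paper's overall strategy: parametrize the line $H_\Ic$ by $t = \langle x-b,\xi_j\rangle$, solve the univariate Lagrange problem \eqref{Form} to obtain $\hat Q(t)$ in $\Oc(n^2)$, set $Q(x)=\hat Q(t(x))$, and argue uniqueness from the fact that the restriction of any competing degree-$\leq n$ polynomial to $H_\Ic$ is a univariate degree-$\leq n$ polynomial determined by its $n+1$ values. Part (i) is handled identically. The genuine difference is in the coefficient-extraction step of (ii): the paper evaluates a closed-form directional-derivative formula $c_{J,K}=i!\,\hat c_i\prod_h\xi_{j,j_h}^{k_h}$ (their Eq.~\eqref{Dxi}) for each of the $N(m,n)$ coefficients, at cost $\Oc(mn)$ per coefficient, whereas you iteratively compute the powers $\ell^k=\ell\cdot\ell^{k-1}$ of the affine form $\ell(x)=\langle x-b,\xi_j\rangle$ and accumulate $\sum_k c_k\ell^k$. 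Your analysis via the hockey-stick identity $\sum_{k=0}^{n-1}N(m,k)=N(m+1,n-1)=\tfrac{n}{m+1}N(m,n)$ is correct and recovers the same $\Oc(mnN(m,n))$ bound. Your expansion scheme buys a more explicit, directly implementable algorithm and yields coefficients in the monomial basis centered at the origin in one pass; the paper's derivative formula, evaluated at $b$, naturally produces the Taylor coefficients about $b$ and so implicitly presumes a shift-tolerant output convention (this is consistent with Theorem~\ref{mainT}, which permits an arbitrary translation $\mu$ of the node set). Either realization meets the stated complexity, so the proposal stands.
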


\begin{proof}For both cases let $\{j\}= \{1,\dots,m\}\setminus \Ic $. Then we define $t=t(x): \R^m \lo \R$ by $t(x) =\li<x-b,\xi_j\re>$. Let $p_1,\dots,p_{n+1} \subseteq H_\Ic$ be pairwise different, then 
$q_i:= t(p_i)$, $i=1,\dots,n+1$ are also pairwise different. Setting $\hat f(q_i) = f(p_i)$, the reasoning above 
yields a uniquely determined polynomial $\hat Q(t) = \hat c_0+ \hat c_1 t + \cdots + \hat c_n t^n \in \R[t]$ with $\deg(\hat Q)\leq n$ such that $\hat Q(q_i)=\hat f(q_i)$, $i=1,\dots,n+1$. 
Now setting  
\begin{equation*}
  Q(x) =   \hat c_0 + \hat c_1t(x) + \cdots \hat c_n t(x)^n \in \R[x_1,\dots,x_m]
\end{equation*}
yields a polynomial with $Q(p_i) = \hat f(q_i) = f(p_i)$, $i=1,\dots,m$. Since the restriction of $t$ to $H_\Ic$ becomes a bijection and $\hat Q$ is uniquely determined, also $Q$ is uniquely 
determined on $H_\Ic$.
Thus, $P=\{p_1,\dots, p_{n+1}\}$ and $Q$ solve the PIP on $H_\Ic$ with respect to $(1,n)$ and $f$. 

If $\xi_1,\dots,\xi_m $ is the standard basis we have $t(x)= \li<x-b,\xi_j\re>=(x-b)_j =x_j-b_j$. 
Hence 
$$ \hat c_i t(x)^i = \hat c_i\sum_{k=0}\binom{i}{k}(-1)^{i-k}x_j^kb_j^{i-k}$$
and therefore $Q(x) = c_0 + c_1x_j + c_2 x_j ^2 + \cdots c_nx_j^n$, where the coefficients $c_i$ of $Q$ are given by: 
\begin{equation}\label{1Dst}
  c_i = \sum_{q=i}^{n}  (-1)^{q-i}\binom{q}{i}\hat c_qb_j^{q-i}\,.
\end{equation}
Under the assumption that floating-point arithmetic operations are $\Oc(1)$, for instance in \cite{Preiss:1998} it is shown that $\binom{n}{k},n,k \in \N$ can be computed by dynamic programming in $\Oc(nk)$. 
Hence, due to Eq.~\eqref{Form}, the coefficients $\hat c_0, \dots, \hat c_n$ of $\hat Q$ can be determined in $\Oc(n^3)$ for every $i=1,\dots,n$ and therefore in $\Oc(n^4)$ for all $i =1,\dots,n$. 
In the special case where  $b_j =0$, we have $c_i = \hat c_i$, 
which determines the coefficients of $Q$ in $\Oc(n^2)$, proving $(i)$. 

If $\xi_1,\dots,\xi_m $ is an arbitrary frame, let $c_{J,K}$, $J=\{j_1,\dots,j_l\}$, $K=\{k_1,\dots,k_l\}$, $|K|:= \sum_{h=1}^lk_h = i$
be the coefficients of $Q$ with respect to the monomials $x_{j_1}^{k_1} x_{j_2}^{k_2} \cdots  x_{j_l}^{k_l}$. 
They are given by:  
\begin{equation}\label{Dxi} 
 c_{J,K} = D^{i-1}_{e_{J,K}}Q(b)=  i!\cdot \hat c_i\cdot \prod_{h=1}^l\li<e_{j_h},\xi_i\re>^{k_h} =  i!\cdot \hat c_i\cdot \prod_{h=1}^l\xi_{j,j_h}^{k_h}  \,,
\end{equation}
where $D^{i-1}_{e_{J,K}}Q(x)$ denotes the \mbox{$(i-1)$}-th derivative of $Q$ at $x$ in standard coordinate directions $e_{j_1}, \dots, e_{j_l}$ with respect to multiplicities $k_1,\dots,k_l$. $\xi_{j,j_h}$ denotes the $j_h$-th element of $\xi_j$. 
Thus, in addition to the $\Oc(n^2)$ operations required to determine $ \hat c_0, \dots, \hat c_n$  
we need $\Oc(mnN(m,n))$ operations to determine \eqref{Dxi} for every coefficient of $Q$, which shows $(ii)$.
\end{proof}

\begin{corollary} Let $m,n \in \mathbb{N}$, $\xi_1,\dots,\xi_m \in \R^m$ an orthonormal frame, $b \in \R^m$, and $\Ic \subseteq \{1,\dots,m\}$ such that $H_\Ic:=H_{\Ic,\xi,b}$ is a 
line. Further let $f : \R^m \lo \R$ be a computable function. Then, there exists a linear transformation $ \varphi : \R^m \lo \R^m$, with $\varphi(H_\Ic)= \spann(e_1)+b$,  and an algorithm to determine a generic set of nodes $P \subseteq H_\Ic$ and a polynomial $Q(x) \in \R[x_1,\dots,x_m]$ in $\Oc(n^2)$ such that
\begin{enumerate} 
 \item[i)] $\varphi^*(Q)$ (i.e., the induced transformation on polynomials) solves the PIP on $H_\Ic$ with respect to $P$ and $f$. 
 \item[ii)] $Q$ solves the PIP on $\varphi(H_\Ic)$ with respect to $P$ and $f \circ \varphi^{-1}$.
\end{enumerate}

\end{corollary}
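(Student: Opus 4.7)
The plan is to reduce the given affine-line PIP to the axis-aligned case already handled by Proposition \ref{PIP1D}(i) in $\Oc(n^2)$. Since $H_\Ic$ is a line, $|\Ic|=m-1$; write $\{j\}=\{1,\dots,m\}\setminus\Ic$, so $H_\Ic = b + \R\xi_j$. I would first construct an orthogonal matrix $A\in\R^{m\times m}$ with $A\xi_j = e_1$, which exists because both $\xi_1,\dots,\xi_m$ and $e_1,\dots,e_m$ are orthonormal frames of $\R^m$ (concretely, send the ordered frame $(\xi_j,\xi_{i_1},\dots,\xi_{i_{m-1}})$ to $(e_1,\dots,e_m)$ for any fixed ordering of the remaining indices). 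Then set $\varphi(x):=A(x-b)+b$, so that $\varphi(H_\Ic) = b + A(\R\xi_j) = b + \R e_1 = \spann(e_1)+b$, which realises the transformation demanded by the statement (reading ``linear'' in the same permissive sense in which \emph{affine transformation} is used elsewhere in the paper).

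Next I would apply Proposition \ref{PIP1D}(i) to the transformed data. In standard coordinates, $\varphi(H_\Ic)$ is exactly the coordinate line carved out by the standard frame with index set $\Ic' = \{2,\dots,m\}$; hence Proposition \ref{PIP1D}(i) applied to $(1,n)$ and the computable function $\hat f := f\circ\varphi^{-1}$ returns in $\Oc(n^2)$ time a set of pairwise-distinct nodes $P' \subseteq \varphi(H_\Ic)$ together with a polynomial $Q\in\R[x_1,\dots,x_m]$ of degree $\leq n$ solving the PIP on $\varphi(H_\Ic)$ for $\hat f$. I would then define $P:=\varphi^{-1}(P') \subseteq H_\Ic$; Lemma \ref{TT} applied to the affine map $\varphi^{-1}$ transports the genericity of $P'$ to $P$.

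Claim (ii) is then immediate, since $P'=\varphi(P)$ and by construction $Q$ interpolates $\hat f = f\circ\varphi^{-1}$ on $P'$ with uniqueness on $\varphi(H_\Ic)$ among polynomials of degree $\leq n$. For claim (i), I would invoke the identity $\varphi^*(Q)(x) = Q(\varphi(x))$, obtained by extending the paper's definition of $\tau^*$ to affine transformations via the polynomial pullback $Q\mapsto Q\circ\varphi$. For every $p\in P$ this yields
\[
\varphi^*(Q)(p) \;=\; Q(\varphi(p)) \;=\; \hat f(\varphi(p)) \;=\; f(p),
\]
and uniqueness of $\varphi^*(Q)$ on $H_\Ic$ follows from uniqueness of $Q$ on $\varphi(H_\Ic)$ through the bijection $\varphi|_{H_\Ic}$, completing both claims.

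The step I expect to require the most care is the compatibility between the paper's definition of $\tau^*$, which is phrased through the images $\tau(e_i)$ and is most naturally read for \emph{linear} $\tau$, and the fact that the alignment of $H_\Ic$ with $\spann(e_1)+b$ typically needs an affine $\varphi$. I would resolve this either by first absorbing $b$ through a translation so that the remaining $\varphi$ becomes strictly linear and the stated formula applies verbatim, or by adopting the polynomial pullback $Q\mapsto Q\circ\varphi$ as the working definition of $\varphi^*$, which coincides with the paper's formula in the linear case. Either route leaves the interpolation identity and the uniqueness transfer above intact, and the total work is dominated by the $\Oc(n^2)$ call to Proposition \ref{PIP1D}(i).
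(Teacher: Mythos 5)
Your proof takes essentially the same route as the paper: construct an orthogonal matrix sending the frame $\xi_1,\dots,\xi_m$ to the standard basis, invoke Proposition \ref{1D} on the aligned line for the pulled-back function $f\circ\varphi^{-1}$, and transport the nodes and their genericity back through $\varphi^{-1}$ via Lemma \ref{TT}. The only deviation is cosmetic --- the paper sets $\varphi(x)=A^Tx$ with $A=(\xi_1\,\cdots\,\xi_m)$, which is strictly linear but lands on $\spann(e_1)+A^Tb$ rather than the stated $\spann(e_1)+b$, whereas you build an affine $\varphi$ that hits the stated target exactly and correctly flag the resulting mild tension with the word ``linear'' --- and your elaboration of claim (i) via the pullback identity is more explicit than the paper's terse ``Setting $P=\varphi^{-1}(\hat P)$ proves $(i)$.''
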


\begin{proof} We set 
$$A:= \li(\begin{array}{ccc}
                       \vdots   &  &\vdots       \\
                       \xi_{1}  & \cdots & \xi_{m} \\
                      \vdots   &  &\vdots      
                      \end{array}\re) \in \R^{ m\times m}
                      \,
                      $$
                      and $\varphi(x) = A^Tx$. Then $\varphi(H_\Ic)=\spann(e_1)+A^Tb$. Thus, due to Proposition \ref{1D}, 
                      we can determine a generic set of nodes $\hat P \subseteq \varphi(H_\Ic)$ and a polynomial $Q$ in $\Oc(n^2)$ such that $Q$ solves the PIP with respect to $\hat P$ and $f \circ \varphi^{-1}$, proving $(ii)$. 
                      Setting $P = \varphi^{-1}(\hat P)$ proves $(i)$.  
                      \end{proof}

\begin{remark}
Alternatively to our formula \eqref{Form}, an explicit $\Oc(n^2)$ approach based on $LU$-decomposition of $V_{1,n}(P)$ can be used  \cite{NASA}. This also allows computing derivatives and integrals efficiently and accurately once 
the decomposition has been obtained. In any case, choosing \emph{Chebyshev nodes} $x_k =\lambda\cos\li(\frac{2k-1}{2n}\pi\re)$, $k=1,\dots,n+1$, $\lambda \in \R^+$ minimizes the approximation error 
of $1$-dimensional interpolation, see \cite{Stewart}.

\end{remark}


\subsection{Linear Interpolation}\label{linear}
In the linear case, i.e., if  $n=1$, we have that for $p_1,p_2\in\R^m$, $p_1 \not =p_2$, there exists a uniquely determined line $L(p_1,p_2)$ containing $p_1$ and $p_2$. Choosing any point $p_3 \in \R^m$ with 
$p_3 \not \in L(p_1,p_2)$ uniquely determines a plane $H_2(p_1,p_2,p_3)$ containing $p_1,p_2,p_3$. Iterating this process and setting $H_1(p_1,p_2)=L(p_1,p_2)$ yields an efficiently computable procedure for constructing $2 \leq k+1 \leq m+1$ nodes $p_1,\dots,p_{k+1}$ that
belong to exactly one $k$-dimensional hyperplane $H_k(p_1,\dots,p_{k+1})\subseteq \R^m$. In this case, we call the nodes $p_1,\dots,p_{k+1} \in \R^m$  
\emph{linear generic}.

If we choose $P =\{0,e_1,\dots,e_m\}$, where $e_1,\dots,e_m$ is the standard basis of $\R^m$, we get:
$$ V_{m,1}(P) = \li(\begin{array}{ccccc}
                       1   & 0   & 0 & \cdots      & 0  \\
                       1  & 1 &0 & \cdots & \vdots\\
                       1 &  0& 1& \dots & 0\\
                       \vdots & \vdots & \ddots& \ddots &\vdots \\
                       1 & 0 & \cdots & 0& 1\\
                      \end{array}\re)\,, \quad 
V_{m,1}^{-1}(P) = \li(\begin{array}{rcccc}
                       1   & 0   & 0 & \cdots      & 0  \\
                       -1  & 1 &0 & \cdots & \vdots\\
                       -1 &  0& 1& \dots & 0\\
                       \vdots & \vdots & \ddots& \ddots &\vdots \\
                       -1 & 0 & \cdots & 0& 1\\
                      \end{array}\re)
                      \,.$$
Thus, $c=(c_0,\dots,c_m)^T \in \R^{m+1}$ with 
$$  c_0 = f(p_1)\,, \quad c_i = f(p_i)-f(p_1)\quad\text{for}\,\,i >1  $$ 
solves $ V_{m,1}(P) c =  F$ with $ F=(f(p_1),\dots,f(p_{m+1}))^T$. Hence, 
\begin{equation}\label{LQ}
  Q(x) =  c_0 + c_1 x_1 + \cdots c_{m} x_m 
\end{equation}
is the unique solution of the PIP with respect to $P$ and the given function $f : \R^m \lo \R$. 
Before solving the linear PIP on the general hyperplanes $H_{\Ic,\xi,b}$ from Definition \ref{Hxi}, we introduce some additional notions.

\begin{proposition}\label{Lin} We are given $m,n \in \mathbb{N}$, a function $f : \R^m \lo \R$, $\xi_1,\dots,\xi_m \in \R^m$ an orthonormal frame, $b \in \R^m$, $\Ic \subseteq \{1,\dots,m\}$, and $H_\Ic:=H_{\Ic,\xi,b}$. 
\begin{enumerate}
 \item[i)]  If $\xi_1,\dots,\xi_m$  is the standard basis, then there exists an algorithm that solves the PIP with respect to $(m,1)$ and $f$ on $H_\Ic$ in $\Oc(k)$, $k=|\Ic|$.  
  \item[ii)] If $\xi_1,\dots,\xi_m$ is an arbitrary frame, then there exists an algorithm that solves the PIP with respect to $(m,1)$ and $f$ on $H_\Ic$ in $\Oc(mk)$, $k=|\Ic|$.
\end{enumerate}

\end{proposition}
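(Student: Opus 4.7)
The plan is to reduce both parts to the explicit generic construction for the full-dimensional linear PIP worked out at the start of section \ref{linear}, where the nodes $\{0,e_1,\dots,e_m\}$ produced the closed-form inverse of $V_{m,1}$ and the polynomial in equation \eqref{LQ}. For part $(i)$, under the standard frame, $H_\Ic$ is the axis-aligned affine subspace defined by $x_i = b_i$ for $i \in \Ic$, so its free coordinate directions are indexed by $J := \{1,\dots,m\}\setminus \Ic$, a set of cardinality $m-k$. I would take as the node set the translate
\[
\hat P := \{b\} \cup \{b + e_j : j \in J\} \subseteq H_\Ic
\]
and define the interpolant by the analog of the formula preceding \eqref{LQ}, namely
\[
Q(x) := f(b) + \sum_{j \in J} \bigl(f(b+e_j) - f(b)\bigr)\,(x_j - b_j).
\]
A direct evaluation confirms that $Q(b) = f(b)$ and $Q(b + e_j) = f(b + e_j)$ for every $j \in J$, so $Q$ interpolates $f$ on $\hat P$.

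For uniqueness in the sense of Definition \ref{Hxi}, I would argue directly rather than appeal to Theorem \ref{generic}: if a linear polynomial $Q'(x) = a_0 + \sum_{i=1}^m a_i x_i$ vanishes on $\hat P$, then subtracting $Q'(b) = 0$ from $Q'(b + e_j) = 0$ gives $a_j = 0$ for every $j \in J$, so $Q'(x) = a_0 + \sum_{i \in \Ic} a_i x_i$. The remaining equation $Q'(b) = 0$ forces $a_0 = -\sum_{i \in \Ic} a_i b_i$, whence $Q'(x) = \sum_{i \in \Ic} a_i (x_i - b_i)$ vanishes identically on $H_\Ic$ because $x_i = b_i$ there. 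Thus $\hat P$ and $Q$ solve the linear PIP on $H_\Ic$ in the sense of Definition \ref{Hxi}, and the construction uses only a constant number of arithmetic operations per coordinate in $J$, yielding the stated bound for part $(i)$.

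For part $(ii)$, the strategy is to diagonalize the situation via Lemma \ref{TT}. Taking $\tau(x) := Ax$ with $A$ the orthogonal matrix whose rows are $\xi_1,\dots,\xi_m$, one checks that $\tau(H_\Ic) = H_{\Ic,(e_i)_i,\tau(b)}$ is axis-aligned, so part $(i)$ applied to $f \circ \tau^{-1}$ on $\tau(H_\Ic)$ yields a generic node set $\hat P$ and an interpolant $\hat Q$. Setting $P := \tau^{-1}(\hat P)$ and $Q := \tau^*(\hat Q)$, the chain of identities $Q(p) = \hat Q(\tau(p)) = (f \circ \tau^{-1})(\tau(p)) = f(p)$ gives the interpolation property on $P$, and Lemma \ref{TT} transports genericity from $\hat P$ to $P \subseteq H_\Ic$. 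The additional work compared to part $(i)$ is expanding the generators $\tau^*(x_i)$ as affine combinations of $x_1,\dots,x_m$ and assembling $Q$ from these, which determines the stated $\Oc(mk)$ bound. The only delicate point I anticipate is the routine bookkeeping of the affine constants introduced by $\tau^*$, namely verifying that they collect correctly against the shifts $b_i$ so that the output $Q$ is the genuine pull-back of $\hat Q$ under $\tau$ and not merely its linear part.
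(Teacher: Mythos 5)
Your argument is mathematically correct and follows essentially the same strategy as the paper (reduce to the axis-aligned case, transport the general case via an orthogonal change of frame and Lemma~\ref{TT}), with two worthwhile simplifications: for part~(i) you place nodes directly on $H_\Ic$ and prove uniqueness by an elementary subtraction argument instead of embedding into $\R^k$, solving via \eqref{LQ}, and pulling back through $\tau^{-1}$; and for part~(ii) your choice of a purely linear $\tau(x)=Ax$ with $A$ orthogonal renders the ``delicate point'' at the end a non-issue --- there is no translation part in $\tau$, so $\tau^*$ is a homogeneous linear substitution and the shift $b$ only enters through the hyperplane data $\tau(b)$, not through affine constants in the pull-back. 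The paper instead uses an affine $\tau$ and does have to track those constants in \eqref{lincoff}.

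There is, however, one thing you should not wave past: your construction uses $|J|+1 = m-|\Ic|+1$ nodes and the same number of nonzero coefficients, so the cost you actually derive is $\Oc(m-|\Ic|)$ for~(i) and $\Oc\big(m(m-|\Ic|)\big)$ for~(ii), \emph{not} $\Oc(|\Ic|)$ and $\Oc(m|\Ic|)$ as the proposition is literally worded. Claiming this ``yields the stated bound'' is therefore incorrect as written. The discrepancy is a slip in the proposition's statement rather than in your argument: the operative quantity is $\dim H_\Ic = m - |\Ic|$ (the $k$ of Definition~\ref{Hxi}), and indeed the paper's own proof begins ``$\tau(H_{1,\dots,k}) = H_\Ic$, $k=|\Ic|$,'' which is dimensionally impossible unless $k = \dim H_\Ic = m-|\Ic|$. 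You should flag the mismatch explicitly and state your complexity in terms of $\dim H_\Ic$, rather than silently asserting agreement with a bound that is off.
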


\begin{proof} For both cases  we choose an affine transformation $\tau : \R^m \lo \R^m$, $\tau(x)=Ax +b$ 
such that $\tau(H_{1,\dots,k}) =H_\Ic$, $k=|\Ic|$. 
Thus, $H_{1,\dots,k} = i_{1,\dots,k}(\R^k)$ is given by the natural embedding of $\R^k$ into $\R^m$. 
Now we solve the linear PIP on $\R^k$ with respect to $\hat f : \R^k \lo \R$, $\hat f(\hat x) = (f\circ \tau\circ i_{1,\dots,k})(\hat x)$ according to our solution \eqref{LQ} and denote  
$$\hat Q(\hat x)= \hat c_0 + \hat c_1 \hat x_1 + \cdots + \hat c_k\hat x_k\,, \quad \hat c=(\hat c_0,\dots,\hat c_k)$$
the thus determined unique polynomial with generic nodes $\hat P=\{0,\hat e_1,\dots,\hat e_k\}$, $\hat e_i \in \R^k$, $\forall i =1,\dots,k$. 
Now we set $i_{1,\dots,k}(\hat c)=(\hat c_1,\hat c_2,\dots,\hat c_k,0,\dots,0)^T$, $P= \tau(\hat P)$ and 
\begin{align*}
 Q(x) &= \hat c_0 + \li<i_{1,\dots,k}(\hat c),\tau^{-1}(x)\re>= \hat c_0 + \li<i_{1,\dots,k}(\hat c),A^T(x-b)\re> \\
 &= \hat c_0 - \li<A\cdot i_{1,\dots,k}(\hat c),b\re> + \li<A \cdot i_{1,\dots,k}(\hat c),x\re>\,.
\end{align*}
Since $Q(p) = Q(\tau(\hat p)) = \hat Q(\hat p)$ and $\hat f(\hat p)= f(p)$ we find that $Q$ solves the PIP with respect to $(m,1)$ and $P$. Due to Lemma \ref{TT}, $P$ is a generic set of nodes. 
Thus, $Q$ is uniquely determined. Therefore, the coefficients $c=(c_0,\dots,c_m)^T$ of $Q$ are given by 
\begin{equation}\label{lincoff}
 c_0 = \hat c_0 - \li<A\cdot i_{1,\dots,k}(\hat c),b\re> \,, \quad  (c_1,\dots, c_m)^T = A \cdot i_{1,\dots,k}(\hat c)\,.
\end{equation}
  
Due to \eqref{LQ} we can compute $\hat c$ in $\Oc(k)$. If $\xi_1,\dots,\xi_m$ is the standard basis then $A$ is given by permuting the columns of the identity matrix with some permutation 
$\pi \in S_m$. Thus, $\hat c_{\pi^{-1}}(i)$, $i=1,\dots,k$ yields the non-vanishing coefficients of $Q$, which requires $\Oc(k)$ computation steps, hence implying $(i)$.
In the general case, \eqref{lincoff} requires $\Oc(mk)$ computation steps, proving $(ii)$. 
\end{proof}

\section{Algebraic Curves}\label{ALGCurv}
In dimensions \mbox{$>1$}, the non-linear case is much more difficult to solve than either the linear or the 1-dimensional cases. 
We illustrate this by considering the non-linear case in dimension $2$, for which the notation is still tractable. 
For this, however, a deeper understanding of algebraic curves is necessary, which we provide in this section. 

We introduce the concepts of algebraic curves assuming that the reader is familiar with basic algebra.
We also simplify some definitions to the specific notion of our problem setup and adapt them to our notation. For the general definitions, we refer the reader to the excellent reference \cite{fulton}. 
However, an understanding of Corollary \ref{CoBezout} is sufficient to proceed reading this article.

For a field $K$ we denote by $K^m=K\times \dots \times K$ the m-$th$ power of $K$ with itself and by
$K[x_1,\dots,x_m]$ the  ring of polynomials with $m$ variables and coefficients in $K$. We call a polynomial $Q \in K[x_1,\dots,x_m]$ \emph{constant} if and only if
$Q \in K$. 
\begin{definition} Let $K$ be a field, $m \in \N$, and $Q \in K[x_1,\dots,x_m]$ be a  polynomial in $m$ variables.
Then $Q$ is called \emph{irreducible} if and only if for all polynomials $Q_1,Q_2 \in K[x_1,\dots,x_m]$ with $Q =Q_1\cdot Q_2$ at least one of the polynomials $Q_1,Q_2$ is a unit
in $K[x_1,\dots,x_m]$, i.e., 
$Q$ is non-constant and there are no non-constant polynomials $Q_1,Q_2\in K[x_1,\dots,x_m]$ with $Q=Q_1\cdot Q_2$. 
\end{definition}

\begin{definition} Let $K$ be a field, $m \in \N$, and $Q \in K[x_1,\dots,x_m]$ be a non-constant polynomial in $m$ variables. Then we call 
the set 
$$ V(Q)=\li\{ x=(x_1,\dots,x_m) \in K^m \mi Q(x) =0\re\} $$ 
an \emph{algebraic hypersurface}. In the special case $m=2$ we call $V$ a \emph{plane curve}. Given polynomials $Q_1,\dots,Q_k \in K[x_1,\dots,x_m]$, $k \geq 1$, we call $V=V(Q_1,\dots,Q_k)$ given by
$$ V=\li\{ x \in K^m \mi Q_1(x) = Q_2(x)= \cdots = Q_k(x)=0\re\} $$ an \emph{algebraic set}. 
 \end{definition}
Given algebraic sets $V,W\subseteq K^m$ we denote by $I(V)\subseteq K[x_1,\dots,x_m]$ the ideal (in the sense of algebraic ring theory) of all polynomials $Q$ with $Q(x)=0$ for all $x \in V$ and with 
$I(V,W)$ the ideal generated by all polynomials vanishing on $V \cap W$. 
If there are non-empty algebraic sets $V_1 \not = V_2 \subseteq K^m$ with $V=V_1\cup V_2$, then $V$ is called \emph{reducible}. It is called \emph{irreducible} otherwise. 
Note that $V$ is irreducible if and only if $I(V)$ is a prime ideal. Consequently, any hypersurface $V(Q)$ is irreducible if and only if $Q$ is an irreducible polynomial.
Moreover, there holds:

\begin{theorem} Let $V\subseteq K^m$, $m\geq 1$ be an algebraic set. Then, there are unique irreducible algebraic subsets $V_1,\dots,V_k \subseteq V$ such that 
$$ V= V_1\cup \dots \cup V_k\,, \quad V_i \not \subseteq V_j\,, \,\forall\, i \not =j\,.$$ 
 \end{theorem}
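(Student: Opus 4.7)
The plan is to combine the Hilbert basis theorem with Noetherian induction, exploiting the inclusion-reversing correspondence $V \mapsto I(V)$ between algebraic sets and (radical) ideals of $K[x_1,\dots,x_m]$. The argument splits cleanly into existence, irredundancy and uniqueness.

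\textbf{Step 1 (Existence of a finite decomposition).} I would set up Noetherian induction on algebraic subsets of $K^m$. By Hilbert's basis theorem, $K[x_1,\dots,x_m]$ is Noetherian, so every ascending chain of ideals stabilizes; passing through $I(\cdot)$, every descending chain of algebraic sets stabilizes as well. Now let $\mathcal{F}$ denote the family of algebraic sets admitting no finite decomposition into irreducible algebraic subsets. If $\mathcal{F} \neq \emptyset$, the descending chain condition forces a minimal element $V \in \mathcal{F}$. Such a $V$ cannot itself be irreducible, so $V = V' \cup V''$ with $V', V'' \subsetneq V$ algebraic. By minimality $V', V'' \notin \mathcal{F}$, hence each admits a finite irreducible decomposition; their union decomposes $V$, a contradiction. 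Thus every algebraic set is expressible as $V = V_1 \cup \dots \cup V_n$ with each $V_i$ irreducible.

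\textbf{Step 2 (Irredundancy).} Starting from any such decomposition, I would iteratively discard any $V_i$ with $V_i \subseteq V_j$ for some $j \neq i$. After finitely many removals the surviving components $V_1,\dots,V_k$ still cover $V$ and satisfy $V_i \not\subseteq V_j$ whenever $i \neq j$.

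\textbf{Step 3 (Uniqueness).} Suppose $V = V_1 \cup \dots \cup V_k = W_1 \cup \dots \cup W_l$ are two irredundant decompositions into irreducible algebraic sets. Fix $i$ and write
\begin{equation*}
V_i \;=\; V_i \cap V \;=\; \bigcup_{j=1}^{l}\bigl(V_i \cap W_j\bigr).
\end{equation*}
Since each $V_i \cap W_j$ is algebraic and $V_i$ is irreducible, at least one intersection must equal $V_i$, i.e.\ $V_i \subseteq W_{j(i)}$ for some index $j(i)$. Applying the same argument with the roles of the two decompositions exchanged, $W_{j(i)} \subseteq V_{i'}$ for some $i'$. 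Chaining yields $V_i \subseteq V_{i'}$, and irredundancy of the $V$-decomposition forces $i = i'$, so $V_i = W_{j(i)}$. The assignment $i \mapsto j(i)$ is therefore a bijection $\{1,\dots,k\} \to \{1,\dots,l\}$ matching the two families up to reordering, proving uniqueness.

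The principal obstacle is guaranteeing that the minimal-element argument in Step 1 actually terminates; this is where Hilbert's basis theorem is indispensable, as it is precisely what converts the ascending chain condition on ideals into the descending chain condition on algebraic sets. Once Noetherianity is in place, Steps 2 and 3 amount to standard set-theoretic bookkeeping using the definition of irreducibility.
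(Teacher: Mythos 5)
The paper states this theorem without proof, presenting it as a standard fact from algebraic geometry and referring the reader to Fulton's \emph{Algebraic Curves} for background. Your proof is correct and is essentially the standard textbook argument one would find there: Hilbert's basis theorem gives the Noetherian property of $K[x_1,\dots,x_m]$, the inclusion-reversing correspondence $V \mapsto I(V)$ transports the ascending chain condition on ideals to the descending chain condition on algebraic sets, a minimal counterexample cannot exist for the existence of a finite irreducible decomposition, redundant components are discarded, and uniqueness follows by the usual inclusion-chasing argument (writing $V_i = \bigcup_j (V_i \cap W_j)$, invoking irreducibility to pin $V_i$ inside some $W_{j(i)}$, and then using irredundancy to force equality and a bijection between the two families). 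So there is nothing to contrast with the paper's reasoning: you have supplied the proof the paper chose to omit, and done so correctly.
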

The $V_1,\dots,V_k$ are called the \emph{(irreducible) components} of $V$. Thus, the decomposition of an algebraic hypersurface $V(Q)$ into its components is in one-to-one correspondence with the decomposition of $Q$ into its irreducible factors. Furthermore we obtain:
\begin{corollary}\label{planeC} If $K$ is infinite, then the irreducible algebraic sets $V \subseteq K^2$ are $\emptyset$, $K^2$, nodes, and irreducible plane  curves.  
\end{corollary}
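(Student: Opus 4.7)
The plan is to use the correspondence between irreducible algebraic sets and prime ideals, stated just before the corollary, together with the ring-theoretic structure of $K[x_1,x_2]$.

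For the easy direction I would verify that each listed family is irreducible. The set $\emptyset$ is irreducible because the definition of a reducible set, as stated in the excerpt, requires a decomposition into two \emph{non-empty} proper algebraic subsets. The ambient $K^2$ is irreducible because $K$ infinite implies that no non-zero polynomial vanishes identically on $K^2$, hence $I(K^2)=(0)$, which is prime in the integral domain $K[x_1,x_2]$. A single point is trivially irreducible, and an irreducible plane curve $V(Q)$ with $Q$ an irreducible polynomial is irreducible by the remark stated just before the corollary.

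For the converse, I would let $V\subseteq K^2$ be a non-empty irreducible algebraic set with $V\neq K^2$. Then $V=V(I(V))$ and $V\neq V((0))$ force $I(V)\neq(0)$, so $I(V)$ is a non-zero prime ideal. Any non-zero $f\in I(V)$ factors in the UFD $K[x_1,x_2]$ into irreducibles, and primeness of $I(V)$ forces some irreducible factor $g$ to lie in $I(V)$. Hence $V\subseteq V(g)$. If $I(V)=(g)$, then $V=V(g)$ is an irreducible plane curve, and we are done. Otherwise there exists $h\in I(V)\setminus(g)$; since $g$ is irreducible and $g\nmid h$ in the UFD $K[x_1,x_2]$, the polynomials $g$ and $h$ are coprime, and $V$ is pinned inside $V(g,h)$.

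The main step, and the main obstacle, is to show that $V(g,h)$ is finite when $g,h$ are coprime. The standard route is to view $g,h$ in the PID $K(x_1)[x_2]$: by Gauss's lemma they remain coprime there (assuming $g$ has positive degree in $x_2$; the exceptional case $g\in K[x_1]$ is handled symmetrically, as $V(g)\neq\emptyset$ forces $g$ to be a linear factor $x_1-a$ up to a unit in $K[x_1]$ and the argument is then immediate). Coprimality in $K(x_1)[x_2]$ yields a B\'{e}zout identity $Ag+Bh=1$ with $A,B\in K(x_1)[x_2]$, and clearing denominators produces a non-zero $d(x_1)\in K[x_1]$ lying in the ideal generated by $g,h$ in $K[x_1,x_2]$. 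Hence the first coordinate of any point of $V(g,h)$ lies in the finite root set of $d$; an analogous argument in $K(x_2)[x_1]$ bounds the second coordinate, yielding finiteness. Since $V$ is irreducible and non-empty, and any finite set of two or more points trivially decomposes as $\{p_1\}\cup\{p_2,\dots,p_r\}$, we conclude $|V|=1$.
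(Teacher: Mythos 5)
The paper states this corollary without giving a proof, presenting it as a standard fact following the decomposition theorem and the cited reference to Fulton's book; so there is no proof in the paper to compare against. Your argument is correct and reproduces the classical route: every proper, non-empty irreducible $V$ has a non-zero prime $I(V)$ containing an irreducible $g$; either $I(V)=(g)$, yielding an irreducible plane curve, or there is $h\in I(V)$ coprime to $g$, and the Gauss's-lemma/resultant argument in $K(x_1)[x_2]$ (and symmetrically in $K(x_2)[x_1]$) shows $V\subseteq V(g,h)$ is finite, whence $V$ is a single point. This gives a self-contained, elementary proof valid over any infinite field, which is exactly what the paper's reference-level treatment sidesteps. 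Two minor remarks: your verification that $\emptyset$ is irreducible leans on the paper's definition of reducibility via \emph{non-empty} pieces, which sits in slight tension with the paper's other stated criterion ``$V$ irreducible iff $I(V)$ prime'' (since $I(\emptyset)$ is the whole ring, not a prime); and your exceptional case $g\in K[x_1]$ does not actually require a symmetric Gauss's-lemma pass, because $g=c(x_1-a)$ together with $g\nmid h$ already forces $h(a,\cdot)$ to be a non-zero univariate polynomial, giving finiteness of $V(g,h)$ directly. Neither point affects the soundness of the proof.
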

Given two plane curves $V,W \subseteq K^2$ we can then ask for the number of their intersections. Due to Corollary \ref{planeC}, we have that 
$\#(V\cap W)=\infty$ whenever $K$ is infinite and $V$, $W$ share a common component. If $V$ and $W$ share no common component, then, as expected, $\#(V\cap W)<\infty$. However, to be more precise we need a concept of counting intersections between $V$ and $W$ 
with multiplicity, e.g., if $V$ intersects both with itself and with $W$ in $p \in K^2$.  
To do so for  $V\subseteq K^2$ and  $p \in V$, we define the 
\emph{local ring} at $p$ by  
$$\mathcal{R}_p=\li\{\frac{Q_1}{Q_2}\mi Q_1,Q_2 \in K[x,y]\,, Q_2(p)\not =0\re\}\,,$$ the set of all rational functions that are well-defined on $p$. Note that $\mathcal{R}_p$ possesses a vector-space structure. Thus, for two plane curves $V,W$ the number 
$$i(p,V\cap W):=\dim\big(\mathcal{R}_p/I(V,W) \big) \in \N$$ 
is a well-defined non-negative integer, called the \emph{intersection number} of $V,W$ at $p$. Indeed, this definition yields an admissible notion of counting intersections

In addition to  
intersections of plane curves, we can also consider projective plane curves, i.e., algebraic curves in the projective plane. This can be done by 
considering $K^3\setminus\{(0,0,0)\}$ and observing that every point $p' \in K^3\setminus\{(0,0,0)\}$ uniquely determines a line through $(0,0,0)$. 
Setting two nodes $p',p'' \in K^3\setminus\{(0,0,0)\}$ to be equivalent if and only if $p',p''$ determine the same line yields an equivalence relation $\sim$. The quotient space 
$\PP^2:=(K^3\setminus\{(0,0,0)\})/_{\sim}  $ is called the \emph{projective plane}. The nodes $p=(x,y,z) \in K^3\setminus\{(0,0,0)\}$ with $z=0$ are interpreted as ``nodes at infinity''. 
Since $K^2 \hookrightarrow \PP^2$ can be embedded into $\PP^2$ by 
mapping $p=(x,y)$ to the equivalence class $p^*$ of $(x,y,1)$, every plane curve $V$ induces a projective plane curve $V^*$. In fact,
this construction yields the observation that every pair of lines intersects in exactly one point, with parallel lines intersecting at ``infinity''. 
While a deeper discussion of these concepts is beyond the scope of this article, we assert that the notion of intersection numbers can be 
adapted to projective curves such that 
$i(p,V\cap W)=i(p^*,V^*\cap W^*)$ holds. This concept has also been used to give a modern formal proof of the following famous result already stated in the $19$-th century \cite{fulton}: 
 \begin{theorem}{\bf (B\'{e}zout's Theorem)} Let $V$ and $W$ be projective plane curves. Assume that $V$ and $W$ have no 
common components. Then
$$ \sum_{p\in \PP^2} i(p,V\cap W) = \deg(V) \cdot \deg(W) \,.$$ 
\label{Bezout}
\end{theorem}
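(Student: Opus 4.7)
The plan is to turn the sum $\sum_p i(p, V \cap W)$ into a single dimension over $K$ and then compute that dimension via a Koszul-type exact sequence. Let $F, G \in K[X,Y,Z]$ be the homogeneous forms defining $V$ and $W$ of degrees $m := \deg V$ and $n := \deg W$. Since $V$ and $W$ share no common component, $F$ and $G$ share no irreducible factor in the UFD $R := K[X,Y,Z]$, so $V \cap W$ is finite; I would then choose projective coordinates so that the line $Z=0$ is disjoint from $V \cap W$, which is possible because the intersection set is finite.

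Because $F, G$ are coprime in $R$, they form a regular sequence, so the Koszul complex
$$
0 \lo R(-m-n) \xrightarrow{(G,-F)} R(-m) \oplus R(-n) \xrightarrow{(F,G)} R \lo R/(F,G) \lo 0
$$
is exact. Reading this off in degree $d$ and using $\dim_K R_d = \binom{d+2}{2}$, I obtain
$$
h(d) := \dim_K (R/(F,G))_d = \binom{d+2}{2} - \binom{d-m+2}{2} - \binom{d-n+2}{2} + \binom{d-m-n+2}{2},
$$
which an elementary binomial manipulation shows equals $mn$ for all $d \geq m+n-2$.

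The remaining task is to identify $h(d)$, for $d \gg 0$, with the sum of local intersection numbers. The plan is to show (a) that the chosen linear form $Z$ is a nonzerodivisor on $R/(F,G)$ beyond some degree, so that multiplication by $Z^{d-d_0}$ gives an isomorphism $(R/(F,G))_{d_0} \lo (R/(F,G))_d$ for all $d \geq d_0$; this lets me replace $h(d)$ by $\dim_K \Gamma$, where $\Gamma := R_*/(F_*, G_*)$ is the affine dehomogenization in the chart $Z \ne 0$. Then (b), $\Gamma$ is a finite-dimensional $K$-algebra whose maximal ideals correspond to the (finitely many) affine intersection points, so the Chinese Remainder Theorem yields $\Gamma \cong \bigoplus_{p \in V \cap W} \Oc_p/(F_*, G_*)\Oc_p$, and each summand has dimension exactly $i(p, V \cap W)$ by definition of the local intersection number. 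Combining the Koszul dimension count with (a) and (b) delivers $\sum_p i(p, V \cap W) = mn = \deg V \cdot \deg W$.

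The main obstacle is step (a): verifying that the line $Z=0$ is disjoint from $V \cap W$ is what makes $Z$ a nonzerodivisor modulo $(F,G)$ in sufficiently high degree, and this is precisely why the coordinate choice at the very beginning cannot be skipped. Everything else reduces to bookkeeping with binomial coefficients and the standard commutative algebra of finite-length graded modules.
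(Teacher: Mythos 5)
The paper does not give its own proof of B\'{e}zout's Theorem but defers to Fulton's \emph{Algebraic Curves} \cite{fulton}, and the proof there is precisely the one you outline: a Koszul resolution of $R/(F,G)$ giving the constant Hilbert function $mn$ in high degree, a coordinate choice making $Z$ a nonzerodivisor on the quotient so that the high graded pieces stabilize and dehomogenize faithfully to the affine ring $\Gamma$, and the Chinese Remainder Theorem identifying $\dim_K \Gamma$ with $\sum_{p} i(p,V\cap W)$. Your sketch is sound; the one point it glosses over is that both the claimed equality and the coprimality of the restrictions $F\vert_{Z=0}$, $G\vert_{Z=0}$ (which is what makes $Z$ a nonzerodivisor in step (a)) are statements over an algebraically closed field.
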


This theorem has an immediate consequence: 
\begin{corollary} Let $V,W \subseteq K^2$ be two  algebraic curves  without common components.
Then 
 $$ \#(V\cap W) \leq  \deg(V)\cdot\deg(W)\,,$$
 where equality holds if and only if $V$ and $W$ do not intersect at infinity and the intersection nodes $p \in (V\cap W) $ are  all \emph{simple nodes}, i.e., nodes with $i(p,V\cap W)=1$.
 \label{CoBezout}
\end{corollary}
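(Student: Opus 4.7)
The plan is to deduce the corollary from B\'{e}zout's Theorem by passing to the projective completion and then reading off the two statements (inequality and equality case) from the terms in the B\'{e}zout sum.

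First I would replace the affine curves $V,W\subseteq K^2$ by their projective completions $V^*,W^*\subseteq \PP^2$ and verify that $V^*$ and $W^*$ still share no common component; this follows because the irreducible components of $V^*$ are the projective closures of the irreducible components of $V$ (and likewise for $W$), so a common component upstairs would descend to a common affine component, contradicting the hypothesis. Having secured this, Theorem \ref{Bezout} applies and yields
\begin{equation*}
\sum_{q\in\PP^2} i(q,V^*\cap W^*) \;=\; \deg(V)\cdot\deg(W)\,.
\end{equation*}

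Next I would split this sum into its contribution from the affine chart and the contribution from the line at infinity. Using the natural embedding $K^2\hookrightarrow \PP^2$, $p\mapsto p^*$, together with the identity $i(p,V\cap W)=i(p^*,V^*\cap W^*)$ recorded just before the statement of Theorem \ref{Bezout}, I get
\begin{equation*}
\sum_{p\in V\cap W} i(p,V\cap W) \;+\; \sum_{q\in V^*\cap W^*,\, q\text{ at infinity}} i(q,V^*\cap W^*) \;=\; \deg(V)\cdot\deg(W)\,.
\end{equation*}
Since every term $i(q,V^*\cap W^*)$ appearing is a positive integer (each intersection point contributes at least $1$ to the local dimension, because the quotient ring $\mathcal{R}_q/I(V^*,W^*)$ is nonzero whenever $q\in V^*\cap W^*$), each summand is $\geq 1$. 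Bounding each intersection multiplicity below by $1$ and discarding the nonnegative contribution at infinity yields
\begin{equation*}
\#(V\cap W) \;\leq\; \sum_{p\in V\cap W} i(p,V\cap W) \;\leq\; \deg(V)\cdot\deg(W)\,,
\end{equation*}
which is the asserted inequality.

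For the equality statement, I would read both chains of inequalities backwards: equality in the second bound forces the contribution at infinity to vanish, meaning $V^*$ and $W^*$ have no common point at infinity; equality in the first bound forces $i(p,V\cap W)=1$ for every $p\in V\cap W$, i.e.\ every intersection is a simple node. Conversely, if both conditions hold, the two inequalities above are equalities and we recover $\#(V\cap W)=\deg(V)\cdot\deg(W)$. The only nontrivial point I expect to need care is the descent argument for common components (and the implicit fact that $i(q,V^*\cap W^*)\geq 1$ at every intersection point), both of which are standard consequences of the ring-theoretic definitions recalled just above the statement.
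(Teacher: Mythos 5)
Your proof is correct and follows essentially the same route as the paper: pass to the projective completions, invoke B\'ezout, use $i(p,V\cap W)=i(p^*,V^*\cap W^*)$ to compare the affine and projective sums, and bound each intersection multiplicity below by $1$. You are somewhat more thorough than the paper's brief argument in that you explicitly verify $V^*,W^*$ share no common component and you spell out the equality case by tracing the two inequalities backwards, but the underlying idea is identical.
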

\begin{proof} Denoting by $V^*,W^*,p^*$ the corresponding projective curves and nodes, the identity $i(p,V\cap W)=i(p^*,V^*\cap W^*)$ implies 
 \begin{equation}\label{leq}
 \sum_{p\in K^2} i(p,V\cap W) \leq \sum_{Q^* \in \PP^2} i(Q^*,V^*\cap W^*) \,. 
\end{equation}
Since $i(p,V\cap W)  \geq 1$ whenever $p \in (V\cap W)$ the corollary follows from  Theorem \ref{Bezout}.
\end{proof}

In the next section, we show how this corollary can be used to solve Problem \ref{VV} in the non-linear case  in dimension $m=2$.

\section{Vandermonde Systems in Dimension 2}
\label{2D}

We apply Corollary \ref{Bezout} to guarantee that a set of nodes is generic by construction. To do so, the following
 is useful:

\begin{lemma}\label{IR}
For $a,b,c \in \mathbb{R} \setminus \left\{0\right\}$ the polynomial $a x^n + b y^n + c$ is irreducible in $\mathbb{R}\left[x,y\right]$. 
\end{lemma}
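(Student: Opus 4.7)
The plan is to reduce this bivariate irreducibility statement to a univariate Eisenstein argument via Gauss's lemma. Setting $R = \R[x]$, I view the polynomial as an element of $R[y] = \R[x,y]$:
\[
f(x,y) \;=\; b\,y^n + 0\cdot y^{n-1} + \cdots + 0\cdot y + (ax^n + c).
\]
Because the leading coefficient $b$ is a nonzero real constant (hence a unit in $R$), $f$ is primitive in $R[y]$. Gauss's lemma over the UFD $R$ then tells me that $f$ is irreducible in $R[y]$ if and only if it is irreducible in $\R(x)[y]$, so it suffices to establish the latter.

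For this I would apply Eisenstein's criterion to $f$ regarded as a polynomial in $y$ with coefficients in $R$. I need an irreducible $p(x) \in R$ such that $p \nmid b$, $p \mid (ax^n + c)$, and $p^2 \nmid (ax^n + c)$. The first condition is automatic because $b$ is a nonzero constant, so no non-constant $p$ can divide it. The other two follow at once from the fact that $ax^n + c$ is squarefree in $\R[x]$: its roots in $\mathbb{C}$ are the $n$ distinct $n$-th roots of $-c/a$, so every irreducible factor over $\R$ appears with multiplicity exactly one. As soon as $n \geq 1$, the polynomial $ax^n + c$ has at least one such irreducible factor, which is a valid Eisenstein prime, and the criterion delivers irreducibility of $f$ in $\R(x)[y]$ and hence in $\R[x,y]$.

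The case $n = 0$ is vacuous (the expression reduces to a nonzero constant and need not be considered); the case $n = 1$ is equally covered by Eisenstein with $p = ax + c$, or more directly by the fact that a degree-one polynomial with nonzero coefficients is automatically irreducible. The only step requiring genuine verification is the squarefreeness of $ax^n + c$, which is immediate from the coprimality of $ax^n + c$ with its derivative $n a x^{n-1}$ in $\R[x]$: the derivative vanishes only at $x = 0$, while $ax^n + c$ evaluated at $0$ is $c \neq 0$. I therefore anticipate no serious obstacle in executing this outline.
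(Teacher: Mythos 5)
Your proof is correct and takes essentially the same route as the paper's: view the polynomial as univariate over a polynomial ring, check primitivity, and apply Eisenstein with an irreducible factor of the degree-zero coefficient, the crux being that this coefficient is squarefree because its complex roots are distinct. The only cosmetic differences are that you treat it as an element of $\R[x][y]$ whereas the paper works in $\R[y][x]$, and your derivative-based squarefreeness check is a touch cleaner than the paper's appeal to scaled roots of unity.
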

\begin{proof}
The constant $a \neq 0$ is a unit in $\mathbb{R} \left[x, y \right]$. Thus, by multiplying $Q$ with  $a^{-1}$   we can assume w.l.o.g.~that $a =1$. 
We then apply the general Eisenstein criterion \cite{Alg}. Therefore,   
we consider $Q := x^n + b y^n + c$ as a polynomial in $x$ with coefficients in $\mathbb{R} \left[y\right] $, namely $ Q  = \sum_{i=0}^n a_i y^i $, where $a_0 = b y^n + c$ and 
$a_1=a_2= \mathellipsis = a_{n-1} = 0$, $a_n = 1 $. 
Since $\mathbb{R} \left[ y \right]$ is an integral domain, we can choose a prime element  $p \in \mathbb{R} \left[ y \right]$ that divides $a_0 = b y^n + c$. 
However, $p^2$ does not divide $b y^n + c  = b \left( y^n + c/b \right) $ for the following reason: 
The roots of $y^n + c/b$ seen as a polynomial in $\mathbb{C}[y]$ are the $n$-th roots of unity (scaled with the common factor $\lvert c/b \rvert ^{1/n}$). Since 
every root appears with  multiplicity $1$, we observe that $b \left(y^n + c/b \right)$ has pairwise-different irreducible factors in $\mathbb{C} \left[y\right]$, which implies that 
$b \left(y^n + c/b \right)$ has pairwise-different irreducible or prime factors in $\mathbb{R}\left[y\right]$.
Summarizing,  the prime element $p$ fulfills 
\begin{align*}
p \mid a_i \; (i<n),  \qquad   p \nmid a_n  ,  \qquad  p^2 \nmid a_0 .
\end{align*}  
In addition, $Q$ is primitive in $\left( \mathbb{R} \left[y\right] \right) \left[ X \right]$, i.e., a common divisor $d \in \mathbb{R} \left[y\right]$ of \linebreak
$a_0, a_1, \mathellipsis, a_n$ has to divide $a_n = 1$, implying that $d$ must be a unit.
Hence, by Eisenstein's criterion, the polynomial $Q$ is irreducible in $\R[x,y]$. 
\end{proof}

Since  $\mathbb{R} [y]$ is a unique factorization domain, we note that $Q$ is even irreducible in $\mathbb{R} ( y ) [ x ]$,
where $\mathbb{R} ( y )$ denotes the field of fractions of the ring $\mathbb{R} [ y ]$. 
Before we construct a general class of generic nodes, we want to consider the simplest non-linear case. 

\begin{ex} Let $L_1,L_2 \subseteq \R^2$ be two transversal lines, i.e., $L_1,L_2$ intersect in exactly one point. Assume there are three nodes $p_1,p_2,p_3 \in L_1$,   
two nodes $p_4,p_5 \in L_2\setminus L_1$, and one point $p_6 \not \in (L_1\cup L_2)$. Certainly, lines are irreducible plane curves. Thus, due to Corollary \ref{CoBezout}, any other  plane curve 
$V \subseteq \R^2$, $V\not = L_1$ of degree \mbox{$\leq 2$} intersects $L_1$ in at most $2$ nodes. Hence, $L_1$ is the only plane curve of degree \mbox{$\leq 2$} containing $p_1,p_2,p_3$.
Since $p_4,p_5,p_6$ can not be contained in one line, the set  $P=\{p_1,\dots,p_6\}$ can not be the zero set of a polynomial of degree \mbox{$\leq 2$} and is therefore generic due to Theorem \ref{generic}.
\label{line} 
\end{ex}

\begin{ex}\label{Ellipse} Assume there are five nodes $P'=\{p_1,\dots,p_5\} \subseteq \R^2$ sitting on an ellipse $E$ given as the zero set of
$Q(x,y)= \frac{x^2}{a^2}+\frac{y^2}{b^2}-1$, $a,b \in \R\setminus\{0\}$, i.e., 
$Q(p_i) = 0$, $\forall \, i=1,\dots,5$. By Lemma \ref{IR} we have that $Q$ is irreducible, which implies, due to Corollary \ref{CoBezout}, that  any other  plane curve 
$V \subseteq \R^2$, $V\not = E$ of degree \mbox{$\leq 2$} intersects $E$ in at most 4 nodes. Hence, $E$ is the only curve of degree \mbox{$\leq 2$} that contains $P'$. 
Therefore, by choosing any $p_6$ outside the ellipse, e.g.~setting $p_6=(0,0)$, Theorem \ref{generic} guarantees that the set $P=\{p_1,\dots,p_6\}$ is generic. 
 \end{ex}

The Examples \ref{line},\ref{Ellipse} suggest the following generalization: 
 
 \begin{theorem} Let $n \in \N$ and $V_1,\dots,V_k$, $k\in \N$ be irreducible algebraic plane curves of degrees $\deg(V_i)=d_i \leq n$, $d_i \geq d_j$, $1 \leq i \leq j \leq k$, such that $\sum_{i=1}^kd_i >n$. 
  Furthermore let $P_i \subseteq V_i$ be finite sets of nodes with 
  \begin{enumerate}
   \item[i)] $P_i \cap P_j = \emptyset \quad \forall i,j: 1\leq i < j \leq k$ 
   \item[ii)] $\#P_i> d_i\cdot \big(n-\sum^{i-1}_{j=1}d_j\big)$
   \item[iii)] $\sum_{i=1}^k \#P_i = N(2,n)$\,.
  \end{enumerate}
Then $P:= \cup_{i=1}^kP_i$ is a generic set of nodes, i.e., the Vandermonde matrix $V_{2,n}(P)$ is regular.
\label{G2}
   \end{theorem}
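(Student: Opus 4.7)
The plan is to argue by contradiction and recursively peel off the defining polynomials of the curves $V_i$ from any hypothetical polynomial vanishing on $P$. Let $Q_i\in\R[x,y]$ be an irreducible polynomial with $V_i=V(Q_i)$, so $\deg Q_i=d_i$. Assume $P$ is not generic; then by Theorem \ref{generic} there exists a nonzero $Q\in\R[x,y]$ of degree $\leq n$ with $Q|_P=0$. The goal is to show that the product $Q_1Q_2\cdots Q_k$ divides $Q$, which forces $\deg Q\geq\sum_{i=1}^k d_i>n$, contradicting $\deg Q\leq n$.

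I would prove this divisibility by induction on $i$. For the base case, $P_1\subseteq V_1\cap V(Q)$ gives $\#(V_1\cap V(Q))\geq\#P_1>d_1 n\geq d_1\deg Q$, so Corollary \ref{CoBezout} forces $V_1$ and $V(Q)$ to share an irreducible component; since $V_1$ is irreducible, this means $V_1\subseteq V(Q)$, and unique factorization in $\R[x,y]$ yields $Q_1\mid Q$. For the inductive step, assume $Q_1\cdots Q_{i-1}\mid Q$ and write $Q=Q_1\cdots Q_{i-1}\,R$ with $R\neq 0$ and $\deg R\leq n-\sum_{j<i}d_j$. Provided that the points of $P_i$ lie off $V_1\cup\cdots\cup V_{i-1}$, the factor $R$ must vanish on all of $P_i$, so $\#(V_i\cap V(R))\geq\#P_i>d_i\bigl(n-\sum_{j<i}d_j\bigr)\geq d_i\deg R$. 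Corollary \ref{CoBezout} then forces $V_i\subseteq V(R)$, whence $Q_i\mid R$ and the induction advances. After $k$ steps the desired divisibility is established.

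The delicate point, and the main obstacle, is the inductive claim that $R$ vanishes on \emph{every} $p\in P_i$: this requires the strengthening $P_i\cap V_j=\emptyset$ for all $j<i$, rather than merely the disjointness $P_i\cap P_j=\emptyset$ of condition (i). Fortunately this strengthening comes for free from the construction, since each intersection $V_i\cap V_j$ ($j<i$) is finite of size $\leq d_i d_j$ by Corollary \ref{CoBezout}, so one may always choose $P_i\subseteq V_i$ to avoid these finitely many points; equivalently one could tighten (ii) to $\#P_i>d_i n$ in order to absorb up to $d_i\sum_{j<i}d_j$ accidental intersection points. A minor bookkeeping item is that $R$ stays nonzero throughout the induction because $\R[x,y]$ is an integral domain and $Q\neq 0$; the process terminates once $\sum_{j\leq i}d_j>n$, at which point the degree comparison closes the contradiction and proves that $P$ is generic.
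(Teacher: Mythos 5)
Your approach is essentially the one the paper intends --- combine Theorem \ref{generic} with Corollary \ref{CoBezout} to force each irreducible $V_i$ to divide any hypothetical degree-$\leq n$ polynomial vanishing on $P$ --- and your write-up is in fact more careful than the paper's own terse argument, which invokes the same corollary with an unexplained degree bound $\leq \sum_{j=1}^{i-1}d_j$ that does not visibly yield the stated conclusion.

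More importantly, the ``delicate point'' you flagged is not something to be patched quietly in the proof: the theorem as literally stated is \emph{false}. Take $n=2$, $k=3$, and the three lines $V_1=\{y=0\}$, $V_2=\{x=0\}$, $V_3=\{y=x\}$, so each $d_i=1$ and $\sum d_i = 3 > 2$. Choose $P_1=\{(1,0),(2,0),(3,0)\}$, $P_2=\{(0,1),(0,2)\}$, $P_3=\{(0,0)\}$. Then $\#P_1 = 3 > 2$, $\#P_2 = 2 > 1$, $\#P_3 = 1 > 0$, the three sets are pairwise disjoint, and $\sum\#P_i = 6 = N(2,2)$, so (i)--(iii) all hold; yet $xy$ has degree $2$ and vanishes on all six points, so $P$ is not generic. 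The failure is exactly your inductive-step concern: $(0,0)\in P_3$ lies on $V_1\cap V_2$, so one cannot conclude that the cofactor $R$ vanishes there. Your proposed strengthening, $P_i\cap V_j=\emptyset$ for all $j<i$, is precisely what is needed and is implicitly satisfied by the constructions in Examples \ref{line} and \ref{Ellipse}. Your alternative of tightening (ii) to $\#P_i > d_i n$, however, is not actually available: since $\sum d_i \geq n+1$, that would force $\sum\#P_i > n\sum d_i \geq n(n+1) > (n+1)(n+2)/2 = N(2,n)$ for every $n\geq 3$, contradicting (iii). In short: same B\'ezout-based route as the paper, but you have exposed (and essentially repaired) a genuine error in the hypotheses that the paper's own proof silently glosses over.
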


\begin{proof}  
Since all $V_i$ are irreducible, by setting $W_i =\cup_{j=1}^i V_j$ and $Q_i= \cup_{j=1}^iP_j$   
Corollary \ref{CoBezout} implies that  there is no curve $V \not = W_i$ of degree $\deg(V)\leq  \sum_{j=1}^{i-1} d_j$ with 
$ V \cap W_i  \supseteq Q_i$, implying that there is no curve $V'$ of degree $\deg(V') \leq n$ containing $P$. Thus, $P$ is generic due to Theorem \ref{generic}.
\end{proof}

Certainly, we can choose the irreducible algebraic sets $V_i$ to be  the zero sets of the polynomials considered in Lemma \ref{IR}. In this case 
we can assume that  one of the parameters $a,b,c$ equals $1$ and therefore the $V_i$ 
depend only on the degree and two additional parameters, which simplifies their computation. Thus, Theorem \ref{G2} provides a recursive procedure for constructing generic sets in dimension \mbox{$m=2$}. 
If in addition the algebraic curves differ significantly, and the nodes are distributed homogeneously on the curves, i.e., the curves intersect almost orthogonally and the nodes are not too close to each other
and not too far from each other,
then inversion of the Vandermonde matrix is numerically accurate and so will be the derived solution of the PIP.

\section{PIP-SOLVER of the General PIP}\label{Decomp}
In dimensions $m \geq 2$ we establish a decomposition of the problem into two sub-problems, one of lower dimension \mbox{$(m-1,n)$} and 
one of lower degree \mbox{$(m,n-1)$}. 


\begin{theorem} Let $m,n \in \N$, $m \geq 1$ and $P \subseteq \R^m$ such that:
\begin{enumerate}
 \item[i)] There is a hyperplane $H \subseteq \R^m$ of co-dimension 1 and $P_1:= P \cap H$ satisfies $\#P_1 = N(m-1,n)$ and is generic with respect to $H$, i.e., 
 by identifying $H\cong \R^{m-1}$ the Vandermonde matrix $V_{m-1,n}(P_1)$ is regular.
  \item[ii)]  The set $P_2= P \setminus H$ satisfies $\#P_2 = N(m,n-1)$ and is generic with respect to the parameters \mbox{$(m,n-1)$}, i.e., the Vandermonde matrix $V_{m,n-1}(P_2)$ is regular.
\end{enumerate}
Then $P$ is a generic set. 
\label{GN}
\end{theorem}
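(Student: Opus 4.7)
The plan is to use Theorem \ref{generic} and argue by contradiction: if $P$ were not generic, there would be a nonzero $Q\in\R[x_1,\ldots,x_m]$ with $\deg(Q)\le n$ vanishing on all of $P$, and I will derive from this that both $R_0:=Q|_H$ and a quotient polynomial $\tilde Q$ of degree $\le n-1$ must vanish on $P_1$ and $P_2$ respectively, contradicting the assumed genericity of these two subsets. First I would perform a WLOG reduction: by Lemma~\ref{TT}, genericity is preserved under affine transformations and degrees of polynomials are unchanged under the induced pullback $\tau^*$, so I may pick $\tau$ with $\tau(H)=H_{1,\ldots,m-1}=\{x_m=0\}$ and work with $H=\{x_m=0\}$. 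As a sanity check, Pascal's rule gives $N(m-1,n)+N(m,n-1)=\binom{m+n-1}{m-1}+\binom{m+n-1}{m}=\binom{m+n}{m}=N(m,n)$, so the cardinality of $P$ is consistent.

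Next I would perform the key algebraic decomposition. Grouping terms of $Q$ by the power of $x_m$ gives a unique representation
\begin{equation*}
Q(x_1,\ldots,x_m)=R_0(x_1,\ldots,x_{m-1})+x_m\,\tilde Q(x_1,\ldots,x_m),
\end{equation*}
where $\deg(R_0)\le n$ (the $x_m=0$ slice of $Q$) and $\deg(\tilde Q)\le n-1$ (since each monomial of $Q$ containing $x_m$ loses at least one degree unit when divided by $x_m$). For every $p=(p_1,\ldots,p_{m-1},0)\in P_1\subseteq H$ we have $Q(p)=R_0(p_1,\ldots,p_{m-1})$, so $R_0$ vanishes on the projection of $P_1$ onto $H\cong\R^{m-1}$. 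Since $P_1$ is generic with respect to $(m-1,n)$ and $\deg(R_0)\le n$, Theorem~\ref{generic} forces $R_0\equiv 0$.

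Hence $Q(x)=x_m\,\tilde Q(x)$. For any $p\in P_2=P\setminus H$ the last coordinate $p_m$ is nonzero (that is precisely what $p\notin H$ means after the reduction), so $0=Q(p)=p_m\tilde Q(p)$ implies $\tilde Q(p)=0$. Thus $\tilde Q$ is a polynomial of degree $\le n-1$ vanishing on the $N(m,n-1)$-point set $P_2$, which is generic with respect to $(m,n-1)$; a second application of Theorem~\ref{generic} yields $\tilde Q\equiv 0$. Combining the two conclusions gives $Q\equiv 0$, contradicting its nonzero choice, and therefore $P$ must be generic.

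The only real obstacle is making sure the degree bookkeeping in the decomposition $Q=R_0+x_m\tilde Q$ is tight: I have to verify that $\deg(\tilde Q)\le n-1$ (immediate from collecting powers of $x_m$) and that $\deg(R_0)\le n$ as a polynomial in $x_1,\ldots,x_{m-1}$, which then legitimately matches the parameter $n$ used in the genericity hypothesis on $P_1$. Beyond that, everything reduces to two direct invocations of Theorem~\ref{generic}, so no further technical machinery is required.
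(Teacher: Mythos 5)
Your proof is correct and follows essentially the same route as the paper: after the WLOG reduction $H=\{x_m=0\}$ via Lemma~\ref{TT}, your decomposition $Q=R_0+x_m\tilde Q$ is exactly the paper's split into $\bar Q_2$ (the $x_m$-free part) and $\bar Q_1=x_m\tilde Q$, and the two invocations of Theorem~\ref{generic} on $P_1$ and $P_2$ match the paper's argument step for step. If anything, your write-up is cleaner: the paper opens with a slightly muddled and vacuous case distinction on whether $\deg(\pi^*_{1,\dots,m-1}(Q))>n$, which your direct decomposition-plus-contradiction bypasses entirely.
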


\begin{proof} 
Due to Lemma \ref{TT}, generic node sets remain generic under affine transformation. Thus, by choosing the appropriate transformation $\tau$,  we can assume  w.l.o.g.~that $H= H_{1,\dots, m-1}$. 
For any polynomial $Q \in \R[x_1,\dots,x_m]$ there holds 
$Q\big(\pi_{1,\dots,m-1}(p)\big) = \pi_{1,\dots,m-1}^*\big(Q(p)\big)$ for all $p \in H$. Thus,  by $(i)$ we observe that  
whenever there is a $Q \in \R[x_1,\dots,x_m]$ with $Q(P) =0$, then $\deg(Q) \geq \deg\big(\pi_{1,\dots,m-1}^*(Q)\big) > n$. Or, if $\deg\big(\pi_{1,\dots,m-1}^*(Q)\big) \leq \deg(Q) \leq n$, we consider 
$\bar Q_1:= Q - i^*_{1,\dots,m-1}\big(\pi_{1,\dots,m-1}^*(Q)\big)$, which consists of all monomials sharing the variable $x_m$ and $\bar Q_2:= Q -\bar Q_1$ consisting of all monomials not sharing $x_m$.  
We claim that $\bar Q_2 = 0$. Certainly, $\bar Q_1(x) = 0$ for all $x \in H$.  Since $P_1$ is generic there are $p \in P_1$ with $\bar Q_2(p)\not = 0$ implying $Q(p) \not = 0$,
which contradicts our assumption on $Q$ and therefore yields $\bar Q_2 = 0$ as claimed. 
In light of this fact, and due to the genericity of $P_1$ w.r.t. $H$ we get that
$Q$ can be decomposed into polynomials $Q=Q_1\cdot Q_2 \quad \text{where} \quad   Q_2(x_1,\dots,x_m) =x_m\,.$ Since $\deg(Q_1)\leq n-1$ we have that $P_2 \not \subseteq Q_1^{-1}(0)$ due the genericity of $P_2$.
At the same time, $P_2 \cap H =\emptyset $ implies that $Q_2(p) \not = 0$ for all $p \in P_2$. Hence, there is $p \in P_2$ with $Q(p)\not =0$, proving the claim due to Theorem \ref{generic}. 
\end{proof}

\begin{remark}
For \mbox{$m=n=2$}, Theorem \ref{GN} reflects the situation of Example \ref{line}.  
  \end{remark}

\begin{ex} Let \mbox{$m=3$}, \mbox{$n=2$} and assume that we have determined a plane $H \subseteq \R^3$ and  a generic set $P_1\subseteq H$  with $\# P_1 =6$ by
following Example \ref{Ellipse}. Then we choose $P_2 \subseteq \R^3\setminus H$ such that $P_2$ is linear generic and observe that  $P=P_1 \cup P_2$ is  a  generic set due to Theorem \ref{GN}.
 \label{3D}
\end{ex}

The question arises whether the decomposition of algebraic curves given in Theorem \ref{GN} allows us to decompose the PIP into smaller and therefore simpler, solvable sub-problems.
This is indeed the case: 

\begin{theorem} \label{SV} Let $m,n \in \N$, $m \geq 1$,  $f : \R^m\lo \R$ a given function, $H \subseteq \R^m$ a hyperplane of co-dimension 1, $Q_H \in \R[x_1,\dots,x_m]$ a polynomial such that $\deg(Q_H)=1$ and $Q_H^{-1}(0) =H$, 
and $P=P_1\cup P_2\subseteq \R^m$ such that $(i)$ and $(ii)$ of Theorem \ref{GN} hold with respect to $H$.
Require $Q_1,Q_2 \in \R[x_1,\dots,x_m]$ to be such that: 
\begin{enumerate}
 \item [i)] $Q_1$ has degree $\deg(Q_1)\leq n$ and solves the PIP with respect to $f$ and $P_1$ on $H$.
 \item [ii)] $Q_2$ has degree $\deg(Q_2)\leq n-1$ and solves the PIP with respect to $\hat f:=(f-Q_1)/Q_H$ and $P_2=P\setminus H$ on $\R^m$.
\end{enumerate}
Then $Q=Q_1+Q_HQ_2$ is the uniquely determined polynomial with $\deg(Q)\leq n$ that solves the PIP with respect to $f$ and $P$ on $\R^m$.
\end{theorem}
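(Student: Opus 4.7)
The plan is to verify in turn that (a) $Q$ has the right degree, (b) $Q$ interpolates $f$ on all of $P$, and (c) $Q$ is the unique such polynomial. Uniqueness is free once we know $P$ is generic, which is exactly what Theorem \ref{GN} gives us from hypotheses $(i)$--$(ii)$, so the real content is steps (a) and (b).

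First, the degree bound. Since $\deg(Q_1)\leq n$ by hypothesis $(i)$, $\deg(Q_H)=1$ by construction, and $\deg(Q_2)\leq n-1$ by hypothesis $(ii)$, additivity of degrees on products gives $\deg(Q_HQ_2)\leq n$, and hence $\deg(Q)=\deg(Q_1+Q_HQ_2)\leq n$. Next, I split the interpolation verification according to the partition $P=P_1\cup P_2$. For $p\in P_1\subseteq H$, we have $Q_H(p)=0$ because $H=Q_H^{-1}(0)$, so
\begin{equation*}
Q(p)=Q_1(p)+Q_H(p)Q_2(p)=Q_1(p)=f(p),
\end{equation*}
where the last equality uses $(i)$, which asserts that $Q_1$ agrees with $f$ on $P_1$ (viewed on $H$; but since $P_1\subseteq H$ this agreement is just pointwise agreement on $P_1$). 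For $p\in P_2$, we have $p\notin H$, so $Q_H(p)\neq 0$ and the auxiliary data $\hat f(p)=(f(p)-Q_1(p))/Q_H(p)$ is well-defined as a real number at each $p\in P_2$. Hypothesis $(ii)$ then yields $Q_2(p)=\hat f(p)$ for $p\in P_2$, so that
\begin{equation*}
Q(p)=Q_1(p)+Q_H(p)\cdot\frac{f(p)-Q_1(p)}{Q_H(p)}=f(p).
\end{equation*}

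Finally, uniqueness follows because Theorem \ref{GN} applied to $P=P_1\cup P_2$ (which satisfies the two genericity hypotheses by assumption) shows that $P$ is generic in the sense of Section \ref{FormVander}: the Vandermonde matrix $V_{m,n}(P)$ is regular, hence the interpolation problem has exactly one polynomial solution of degree $\leq n$.

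The subtle point I expect to need to state carefully is the well-definedness and role of $\hat f$. Although $\hat f=(f-Q_1)/Q_H$ is only a rational function on $\R^m$ (and is singular on $H$), what we need in $(ii)$ is merely its \emph{values} on the finite set $P_2$, which is disjoint from $H$; these values are what the sub-PIP interpolates, and they coincide with $(f(p)-Q_1(p))/Q_H(p)$ precisely because $Q_H(p)\neq 0$ there. Once this is clarified, the three steps above combine into the theorem.
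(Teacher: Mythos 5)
Your proof is correct and follows essentially the same route as the paper's: you verify the degree bound, check interpolation separately on $P_1$ (where $Q_H$ vanishes) and $P_2$ (where $Q_H\neq 0$ so $\hat f$ is well-defined), and invoke Theorem \ref{GN} for genericity and hence uniqueness. Your explicit discussion of why $\hat f$ need only be defined pointwise on $P_2$ is a welcome clarification of a point the paper states more tersely.
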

\begin{proof}  By our assumption on $Q_H$ and $Q_1$ we have that $Q(x)=Q_1(x)$ $\forall x \in H$ and therefore $Q(p) =f(p)$ $\forall p \in P_1 = P \cap H$. 
At the same time, $Q_H(x) \not =0$ for all $x \not \in H$. Therefore $\hat f$ is well defined, and $Q(p)= Q_1(p) + \hat f(p)Q_H(p)=f(p)$ $\forall p \in P_2=P\setminus H$. Hence $\deg(Q)\leq n$, and $Q$ solves the PIP with respect to $f$ 
and a generic set of nodes $P\subseteq \R^m$. Thus, $Q$ is the unique solution of the PIP with respect to $P$ and $f$. 
 \end{proof}
 
 \begin{remark}\label{normal}
 Note that $Q_H$ can be constructed by choosing a (usually unit) normal vector $\nu \in \R^m$ of $H$ and a vector $b \in H$, setting 
 $$Q_H(x) = \nu\cdot(x - b)\,.$$ Indeed, we have that $Q_H(x)=0$ for all $x \in H$ and 
 $Q_H(x)\not =0$ for all $x \in R^m\setminus H$.
  \end{remark}

We close this section by stating the main result in a more precise way and delivering its proof. 
 \begin{theo}[Main Result] Let $m,n \in \N$, $m\geq 1$, and $f : \R^m \lo \R$ be a given function. Then there exists an algorithm with runtime complexity $\Oc\big(N(m,n)^2\big)$,  requiring storage in $\Oc\big(mN(m,n)\big)$, that computes: 
 \begin{enumerate}
  \item [i)] a generic node set $P \subseteq \R^m$ with respect to $m,n$;
  \item[ii)] a polynomial $Q \in \R[x_1,\dots,x_m]$ with $\deg (Q) \leq n$, such that $Q$ is the unique solution of the PIP with respect to $f$ and 
  $P'=P+\mu$, where $P$ was generated in $(i)$ and $\mu \in \R^m$ is an arbitrary vector.  
 \end{enumerate}
\label{mainT}
 \end{theo}
 
 \begin{proof} We start by proving $(i)$ and $(ii)$ with respect to the runtime complexity. To do so, we  claim that there is a constant $C\in \R^+$ and an algorithm computing  $(i)$ and $(ii)$ in less than $CN(\mbox{m,n})^2$ computation steps. To prove this claim, we argue by induction on $N(m,n)$.
 If $N(m,n)=1$, then $n=0$. According to Section \ref{SC}, the claim then holds. Now let $N(m,n)>1$. If $m=1$ or $n=1$ Propositions \ref{1D} and \ref{Lin}, provide us with the required runtime bounds. Thus, the claim holds by choosing $C$ as the largest occurring constant.
 If $m>1$ and $n>1$ then we choose $\nu,b\in \R^m$ with $||\nu ||=1$ and consider the hyperplane $H=Q_H^{-1}(0)$, $Q_H(x)=\nu(x-b)$.
 By identifying $H\cong \R^{m-1}$, induction yields that we can determine a set of generic nodes $P_1\subseteq H$ in less than \mbox{$CN(m-1,n)^2$} computation steps. 
 Induction also yields that a set $P_2 \subseteq R^m$ of generic nodes can be determined with respect to $n-1$ in less than
 \mbox{$CN(m-1,n)^2$} computation steps. By translating $P_2$ with $\lambda \nu$, i.e., setting $P_2'=P_2 + \lambda \nu $, $\lambda \in \R$, we can guarantee that 
 $P_2 \cap H = \emptyset$. Hence, the union $P=P_1 \cup P_2'$ of the corresponding generic sets of nodes is also generic due to Theorem \ref{GN}, proving $(i)$. 
 
 It is $N(m-1,n)+N(m,n-1)=N(m,n)$. Thus, by $(ii)$ and induction, polynomials $Q_1,Q_2 \in \R[x_1,\dots,x_m]$, $\deg(Q_1)=n$, $\deg(Q_2)=n-1$ can be determined in less than 
 \mbox{$CN(m-1,n)^2+CN(m,n-1)^2 \leq CN(m,n)^2$} computation steps, such that $Q_1$ solves the PIP on $H$ with respect to $f$ and $P_1$,
 while $Q_2$ solves the PIP with respect to $\hat f = (f-Q_1)/Q_H$ and $P_2'$. Due to Theorem \ref{SV} we have that $Q_1+Q_HQ_2$ solves the PIP with respect to $f$ and $P$. Thus, for any $\mu \in \R^m$, setting $f'(x)=f(x+\mu)$ yields a solution 
 with respect to $f$ and $P'=P+\mu$.  Therefore, it remains to bound the steps required for computing $Q_1+Q_HQ_2$. The bottleneck herin lies in the computation of $Q_HQ_2$, which requires $C_2(m+1)N(m,n-1)$, $C_2 \in \R^+$ 
 computation steps. Observe that $2N(m-1,2) =m(m+1)$ and $N(m-1,k)\leq N(m-1,k')$ for $k\leq k' \in \N$.
 Thus, $(m+1)N(m,n-1) \leq 2N(m-1,n)N(m,n-1)$, for $n>1$, which shows that $Q_1+Q_HQ_2$ can be computed in less than $C_3N(m-1,n)N(m,n-1)$, $C_3 \in \R^+$ computation steps. 
 Hence, by using again $N(m-1,n)+N(m,n-1)=N(m,n)$ and assuming $2C\geq C_3$, we have that 
 $$ C\big(N(m-1,n)^2 + N(m,n-1)^2\big) + C_3N(m-1,n)N(m,n-1) \leq CN(m,n)^2 \,,$$
 proving $(ii)$. 
 
 We obtain the storage complexity by using an analogous induction argument. If $N(m,n)=1$ we need to store at most $CmN(m,n) =D$, $D \in \N$, numbers. If 
 $N(m,n)>1$ and  $m=1$ or $n=1$, Propositions \ref{1D} and \ref{Lin} imply that we have to store the generic nodes, which requires $D_1mN(m,n)$ numbers, and the coefficients, requiring $D_2N(m,n)$ numbers.
 This shows the claim by setting $D=\max\{D_1,D_2\}$.
 If  $N(m,n)>1$ and  $m,n>1$, using the same splitting of the problem as above, induction yields that we have to store at most $D(m-1)N(m-1,n)$, $DmN(m,n-1)$ numbers for each sub-problem. Thus, altogether we need to store 
$ D(m-1)N(m-1,n) + DmN(m,n-1) \leq DmN(m,n)$ numbers, proving the storage complexity.

%
%
%
 \end{proof}

Summarizing our results so far, we have provided  a sufficiently deep understanding of the problem to be able to implement an algorithm that efficiently solves the PIP in arbitrary dimension and order.
A more detailed description of this algorithm is given in the next section.

\section{Algorithm}\label{Experiments}  
As illustrated in Example \ref{3D}, Theorem \ref{mainT} enables us to construct a generic set by determining a hyperplane $H_0$,  a generic set 
 $P_1 \subseteq H_0$
with respect to the parameters \mbox{$(m-1,n)$}, and a generic set $P_2$ with respect to the parameters \mbox{$(m,n-1)$} on $H=\R^m$, such that $H_0 \cap P_2 = \emptyset$. Iterating
this decomposition 
yields a tree $T_{H,Q}$ of sub-problems with the root corresponding to the global problem with respect to $(m,n)$ and leaves corresponding to linear or $1$-dimensional sub-problems. 
Thus, by providing that the requirements of Theorem \ref{SV} are fulfilled for recursive decomposition step, we are able to solve the PIP by composing  solutions of only $1$-dimensional and linear sub-problems. We exploit this to design an algorithm, termed PIP-SOLVER, that solves the general PIP efficiently and numerically accurately.  

\begin{algorithm}[t!]
\caption{Build $T_{H,Q}$}
{\bf Input:} $(T_{m,n},\ee,\sigma)$, frame $F= \{\xi_1,\dots,\xi_m\}$; 
\begin{algorithmic}
\STATE{$\xi_{m+1} \leftarrow 0$, $b \leftarrow 0 $;}
\FOR{ $k= 1$ {\bf to} $D(T_{m,n})-1$}
\WHILE{$\Vc_{m,n}^1(k) \not = \emptyset$}
\STATE{Choose $v \in \Vc_{m,n}^1(k)$;}
\STATE{ $\nu_\ee \leftarrow   \xi_{\sigma_1(v)+1}$, $b_\ee \leftarrow  b_{\hat \ee} + \alpha(\ee(v))\nu_{\ee}$;}
\STATE{$\Vc_{m,n}^1(k) \leftarrow \Vc_{m,n}^1(k) \setminus \{v\}$;}
\ENDWHILE
\STATE{$k \leftarrow k+1$}
\ENDFOR
\RETURN List of Polynomials $Q_{H_\ee}(x)=\nu_\ee(x -b_\ee)$;
\end{algorithmic}
\label{A1}
\end{algorithm}

\begin{definition}\label{TMN} Let $m,n \in \mathbb{N}$, $m,n\geq 2$. We define a tree $T_{m,n}=(\Vc_{m,n},\Ec_{m,n})$ as follows: We start with a root $r$ and  let $D(v) \in \N$ denote the depth of vertex 
$v \in \Vc_{m,n}$, i.e., 
the shortest-path distance from $v$ to the root $r$. Let $\Vc_{m,n}(k)=\li\{v \in \Vc_{m,n} \mi K(v)=k\re\}$ be the set of all vertices at depth $k$. Thus, $\Vc_{m,n}(0)=\{r\}$. Additionally,
we denote by $S\subseteq \{0,1\}^{\N}$ the set of all  finite sequences with values in $\{0,1\}$ and define 
$$ \ee : \Vc_{m,n} \lo S\,, \quad \sigma : \Vc_{m,n}(k)  \lo \N\times \N $$
recursively by setting $\sigma(r)=(m,n)$, $\ee(r) = \emptyset$ and introducing two children $v,u \in \Vc_{m,n}(k)$ if the parent $w \in \Vc_{m,n}(k-1)$ satisfies $\sigma_1(w) >1$ and $\sigma_2(w) >1$. 
We set $\ee(v)=(\ee(w),0)$,  $\ee(u) = (\ee(w),1)$ and label them with $\sigma(v) = (\sigma_1(v),\sigma_{2}(v))=(m,n-1)$, $\sigma(u) = (\sigma_1(u),\sigma_{2}(u))=(m-1,n)$, respectively.
We denote the resulting binary, labeled, enumerated, rooted tree   by $(T_{m,n},\ee,\sigma)$.
\end{definition}

Example \ref{AS} illustrates the construction of such a tree and of how Algorithms \ref{A1}, \ref{P}, and \ref{A2} use the data structure $T_{m,n}$ to recursively solve the PIP.
For a better understanding of this, the following facts and notions are useful: 

The depth of a vertex $v \in \Vc_{m,n}$  is given by $D(v) = m-\sigma_1(v) + n - \sigma_2(v)$, and the total depth of the tree $T_{m,n}$ is denoted by $D(T_{m,n})$.
We let $L_{m,n}$ be the set of all leaves
and observe:
\begin{lemma} Let $m,n \in \N$ and $T_{m,n}$ be given. 
\begin{enumerate}
 \item[i)] The tree depth is given by $D(T_{m,n})=m+n-2$. 
 \item[ii)] The number $\# L_{m,n}$  of all leaves of $T_{m,n}$ is given by $N(m-1,n-1)$. 
\end{enumerate}
\end{lemma}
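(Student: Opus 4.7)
The plan is as follows. For part (i), I would use the structural observation that every edge of $T_{m,n}$ decrements exactly one of $\sigma_1$ or $\sigma_2$ by one, which is immediate from the recursive labelling rule $\sigma(v) = (m, n-1)$ or $(m-1, n)$ attached to the two children of a parent $w$ with $\sigma(w) = (m,n)$. A short induction on depth then yields the identity $D(v) + \sigma_1(v) + \sigma_2(v) = m + n$ for every vertex $v$, as already recorded in the paragraph preceding the lemma. Consequently, $D(T_{m,n})$ is determined by the minimum value of $\sigma_1(v) + \sigma_2(v)$ over all leaves. Since the splitting rule stops as soon as one coordinate reaches $1$, I would exhibit an explicit descent from the root $(m,n)$ that decrements the first coordinate down to $2$ and then the second down to $1$ (or vice versa), producing a leaf that realises the claimed maximum $D(T_{m,n}) = m + n - 2$.

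For part (ii), induction on $m + n$ is natural because $T_{m,n}$ is defined self-similarly. In the base cases $m = 1$ or $n = 1$ the root already meets the stopping criterion, so $T_{m,n}$ consists of a single vertex and $\#L_{m,n} = 1$, which matches $N(m-1, n-1) = \binom{m+n-2}{m-1}$ since one of the binomial entries is zero. For $m, n \geq 2$, the two children of the root carry labels $(m, n-1)$ and $(m-1, n)$, and the subtrees they root are, by the self-similarity of the construction, isomorphic to $T_{m, n-1}$ and $T_{m-1, n}$ respectively. Partitioning the leaves across the two subtrees gives
\begin{equation*}
\#L_{m,n} \;=\; \#L_{m,n-1} + \#L_{m-1,n}.
\end{equation*}
The inductive hypothesis identifies the right-hand side with $N(m-1, n-2) + N(m-2, n-1)$, and Pascal's identity $\binom{m+n-3}{m-1} + \binom{m+n-3}{m-2} = \binom{m+n-2}{m-1}$ collapses this to $N(m-1, n-1)$, completing the induction.

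The main technical care lies in the boundary bookkeeping: I have to confirm that leaves are exactly the vertices satisfying $\sigma_1(v) = 1$ or $\sigma_2(v) = 1$ (in particular, that no vertex ever carries the label $(1,1)$, since such a vertex would require a parent with a coordinate equal to $1$, contradicting the splitting condition), and to correctly handle the degenerate base cases of the induction in (ii) where either coordinate drops to one. Neither is genuinely difficult, so I expect both assertions to follow essentially immediately from the recursive structure of $T_{m,n}$ combined with one application of Pascal's identity.
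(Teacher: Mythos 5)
Your proof of part (ii) is essentially identical to the paper's: induction on $m+n$, the recursion $\#L_{m,n} = \#L_{m-1,n} + \#L_{m,n-1}$ coming from the two subtrees rooted at the children of $r$, and the identity $N(m-1,n)+N(m,n-1)=N(m,n)$ (Pascal) to close the induction. You are a bit more careful with the base cases, explicitly covering $m=1$ or $n=1$ (single-vertex trees) where the paper starts from $\#L_{2,2}=2=N(1,1)$; this is a harmless improvement, and your remark that the label $(1,1)$ can never occur is a correct and worthwhile observation that the paper leaves implicit.

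For part (i) your argument has an off-by-one gap. You (correctly) use the relation $D(v)=m+n-\sigma_1(v)-\sigma_2(v)$ and then propose a descent ending at a leaf with label $(2,1)$, whence $\sigma_1(v)+\sigma_2(v)=3$ and therefore $D(v)=m+n-3$ by your own formula — not $m+n-2$ as you assert. Indeed, since you have already ruled out $(1,1)$ as a label, $\min_{\text{leaves}}(\sigma_1+\sigma_2)=3$, so the maximal vertex depth (shortest-path distance from the root, which is how the paper defines $D(v)$, with $D(r)=0$) is exactly $m+n-3$. The paper's claimed value $m+n-2$ is reached only if $D(T_{m,n})$ is interpreted as the \emph{number of levels} of the tree, i.e., $\max_v D(v)+1$; this is consistent with how $D(T_{m,n})$ is used in Algorithm~\ref{A1} (the loop runs $k=1,\dots,D(T_{m,n})-1$, which must cover all non-root levels), but it clashes with the paper's own definition of $D(v)$. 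The paper gives no proof of (i) beyond ``follows directly by Definition,'' so it does not resolve this. Your proof should either state explicitly that $D(T_{m,n})$ denotes the number of levels and add the final ``$+1$'', or else acknowledge that the leaf $(2,1)$ realises $D(v)=m+n-3$ and that the claimed $m+n-2$ rests on the level-count convention.
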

\begin{proof} Point $(i)$ follows directly by Definition \ref{TMN}. To see $(ii)$, we argue by induction on $m+n$. Indeed $\#L_{2,2}=2=N(1,1)$. If $m+n>4$ then we consider the subtrees $T_{m-1,n}$, $T_{m,n-1}$ rooted at the vertices 
$u,v \in V_{m,n}$ with labels $\sigma(u)=(m,n-1)$, $\sigma(v)=(m-1,n)$. By induction we have $\#L_{m-1,n}=N(m-2,n-1)$ and $\#L_{m,n-1}=N(m-1,n-2)$. Now we use 
$N(m-1,n)+N(m,n-1)= N(m,n)$ to verify that $\#L_{m,n}= \#L_{m-1}+\#L_{m,n-1} = N(m-2,n-1)+ N(m-1,n-2)= N(m-1,n-1)$.  
 \end{proof}
Furthermore, we consider:  
$$\Vc_{m,n}^1(k) = \li\{v \in \Vc_{m,n}(k) \mi \text{ the last entry of $\ee(v)$ equals $1$} \re\}\,.$$ 
By  $\hat \ee(v)$ we denote the sequence obtained by deleting the last entry of $\ee(v)$. Moreover, we denote by $F=\{\xi_1,\dots,\xi_m\} \subseteq \R^m$ an orthonormal frame of $\R^m$, i.e., $\xi_i \perp \xi_j$ $\forall i\not =j$, $1 \leq i,j \leq m$ and introduce $\alpha : S \lo \Z$ as the enumeration from dual system to rationals by: 
$$ \alpha(\ee) =  \sum_{i=1}^{|\ee|}(-1)^{i-1}\ee_i\lambda^{i}\,,\quad \lambda \in \mathbb{Q}\,.$$
Algorithm \ref{A1} uses the function $\alpha$  to generate hyperplanes $H_{\ee}$ by shifting planes $\widetilde H_\ee \mapsto H_\ee$, $\widetilde H_\ee  \perp \xi_{k+1},\dots,\xi_m$  such that 
$H_{\hat \ee} \subseteq H_{\ee}$ for all $\ee$. Due to the unique enumeration $\ee(v)$ of the vertices $v \in \Vc_{m,n}$ and the construction of $\alpha$ we observe that for suitable choice of $\lambda$:
\begin{equation}\label{intersect}
F \cap H_{\ee} = \emptyset , \,\,  \forall \ee \not = \emptyset \quad \text{and} \quad H_{\ee} \cap H_{\ee'} =\emptyset \text{ whenever }\dim H_{\ee} = \dim H_{\ee'}\,.
 \end{equation}
Consequently, Algorithm \ref{P} determines generic sets independently for  each sub-problem occurring on the leaves of $T_{m,n}$. Thus, we can  choose \emph{Chebyshev nodes} on the 1-dimensional plane $H_\ee$ given by 
$H_\ee=\li\{\mu\xi_1 + b_\ee \mi \mu \in \R\re\}$, i.e., we set 
$$ \mathrm{cheb}_{F,\kappa}(n,b)=\kappa\cos\li(\frac{2k-1}{2n}\pi\re) \xi_1 + b\,,  \quad k=1,\dots,n\,, b \in \R^m\,, \kappa \in \R^+\,. $$
Generic sets for the liner sub-problems are determined as described in Section \ref{linear}. 
\begin{algorithm}[t!]
\caption{Determine generic sets $P_v$, $v \in L(T_{m,n})$. }
{\bf Input:} $f$, $(T_{m,n},\ee,\sigma)$, frame $F= \{\xi_1,\dots,\xi_m\}$, $b_\ee$;
\begin{algorithmic}
\STATE{$b \leftarrow 0$;}
\WHILE{$L(T_{m,n}) \not = \emptyset$}
\STATE{Choose $v \in L(T_{m,n})$;}
\IF{$\sigma_1(v)=1$}
\STATE{ $P_v \leftarrow \mathrm{cheb}_F\big(\sigma_2(v),b_{\ee(v)}\big) $;}
\ELSIF{$\sigma_2(v)=1$}
\STATE{ $p_1 \leftarrow b_{\ee(v)}$, $p_i \leftarrow \xi_{i} + b_{\ee(v)} $, $i =2,\dots,\sigma_1(v)$, $P_v = \{p_1,\dots,p_{\sigma_1(v)}\}$;}
\ENDIF
\STATE{$L(T_{m,n}) \leftarrow L(T_{m,n}) \setminus \{v\}$;}
\ENDWHILE
\RETURN List of generic sets $P_v$.
\end{algorithmic}
\label{P}
\end{algorithm}
 \begin{algorithm}[t!]
\caption{PIP-SOLVER: compute the solution $Q$ of the PIP w.r.t.~$m,n$ and $f$ on $H=\R^m$.}
{\bf Input:} $f, (T_{m,n},\ee,\sigma), Q_{H_\ee}$ 
\begin{algorithmic}
\STATE{$\hat f\leftarrow f$}
\STATE{$v = v \in L_{m,n}$ leaf with $\sigma(v) =(1,n)$}
\STATE{$\hat Q_v \leftarrow 0$}
\WHILE{finish $=0$}
\IF{$\sigma_1(v)=1$}
\STATE{Compute the generic nodes $P_v$ and the solution $Q_v$ of the 1-dimensional PIP w.r.t.~$P_v$ and $\hat f = (f -\hat Q_v)/\hat Q_{H,v}$}
\STATE{$\mathrm{problem}(v)\leftarrow \mathrm{solved}$}
\STATE{$w\leftarrow \mathrm{parent}(v)$}
\STATE{$\hat Q_w \leftarrow \hat Q_v + Q_v \cdot \hat Q_{H,v}$}
\STATE{$\hat Q_{H,w} \leftarrow  \hat Q_{H,v}$}
\STATE{$v\leftarrow w$.}
\ELSIF{$\sigma_2(v)=1$}
\STATE{Compute the generic nodes $P_v$ and the solution $Q_v$ of the linear PIP w.r.t.~$P_v$ and $\hat f = (f -\hat Q_v)/\hat Q_{H,v}$}
\STATE{$\mathrm{problem}(v)\leftarrow \mathrm{solved}$}
\STATE{$w\leftarrow \mathrm{parent}(v)$}
\STATE{$\hat Q_w \leftarrow  (\hat Q_v + Q_v \cdot \hat Q_{H,v})\cdot Q_{H_{\ee(v)}}$}
\STATE{$\hat Q_{H,w} \leftarrow  \hat Q_{H,v}\cdot Q_{H_{\ee(v)}}$}
\STATE{$v\leftarrow w$.}
\ELSIF{$\mathrm{problem}(u)\not= \mathrm{solved}$, $u=\text{left-children}(v)$}
\STATE{$\hat Q_u \leftarrow \hat Q_v$}
\STATE{$ \hat Q_{H,u} \leftarrow \hat Q_{H,v}$}
\STATE{$v\leftarrow u$.}
\ELSIF{$\mathrm{problem}(w)\not= \mathrm{solved}$, $w=\text{right-children}(v)$}
\STATE{$\hat Q_u \leftarrow \hat Q_v + Q_v\cdot \hat Q_{H,v}\cdot Q_{H,v}$}
\STATE{$ \hat Q_{H,u} \leftarrow \hat Q_{H,v}\cdot Q_{H,v}$}
\STATE{$v\leftarrow u$.}
\ELSIF{$\mathrm{problem}(u)= \mathrm{problem}(w)= \mathrm{solved}$, $u,w=\text{left/right-children}(v)$}
\STATE{$Q_v \leftarrow Q_u + Q_w\cdot Q_{H,w}$}
\STATE{$\mathrm{problem}(v)\leftarrow \mathrm{solved}$}
\IF{$v=\mathrm{root}$}
\STATE{$\mathrm{finish} \leftarrow 1$}
\ELSE
\STATE{$v\leftarrow \mathrm{parent}(v)$}
\ENDIF
\ENDIF
\ENDWHILE
\RETURN $Q_{v}$;
\end{algorithmic}
\label{A2}
\end{algorithm}
The computations are explained in the following example.  

\begin{ex}\label{AS} Let $m=n=3$ and $F =\{e_1,e_2,e_3\}$. Then, following Algorithm \ref{A1}, we find that the planes 
\begin{align*}
& H_0 = \R^3, \,\,  H_1 =  H_{x,y}+2e_3, \,\, H_{0,0} =   \R^3, \,\, H_{0,1}=  H_{x,y}-4e_3, \,\, H_{1,0} =   H_1, \\ 
& H_{1,1}=  H_x +2e_3 -2e_2,\,\, H_{0,1,1}=  H_x -4e_3 +4e_2, \,\,  H_{1,0,1} =   H_x +2e_3 +8e_2 \,.
\end{align*}
can be determined by $Q_{H_0}(x)=0$, $Q_{H_{\ee,0}}(x)=Q_{H_{\ee}}(x)$, $\ee \in  S \subseteq \{0,1\}^{\N}$ and the polynomials
\begin{align*}
 & Q_{H_1}(x) = e_3(x -2e_3), \,\, Q_{H_{0,1}} = e_3(x + 4e_3), \,\, Q_{H_{1,1}}(x) = e_2(x -2e_3+2e_2), \\ 
 & Q_{H_{0,1,1}}(x) = e_2(x +4e_3-4e_2), \,\, Q_{H_{1,0,1}}(x) = e_2(x-2e_3-8e_2). 
\end{align*}
For instance, $H_{0,1,1} = \li\{x \in \R^3 \mi Q_{H_0}(x)=Q_{H_{0,1}}(x) = Q_{H_{0,1,1}}(x) =0 \re\}$ with $Q_{H_0}(x)=0$. Indeed, no planes of equal dimension intersect.
Therefore, Algorithm \ref{P} can use the procedure of Section \ref{SC} to generate generic disjoint sets $P_v$, $v \in L(T_{3,3})$, for every sub-problem independently.
\end{ex}

  \begin{figure}[t!]
  \centering
  \input{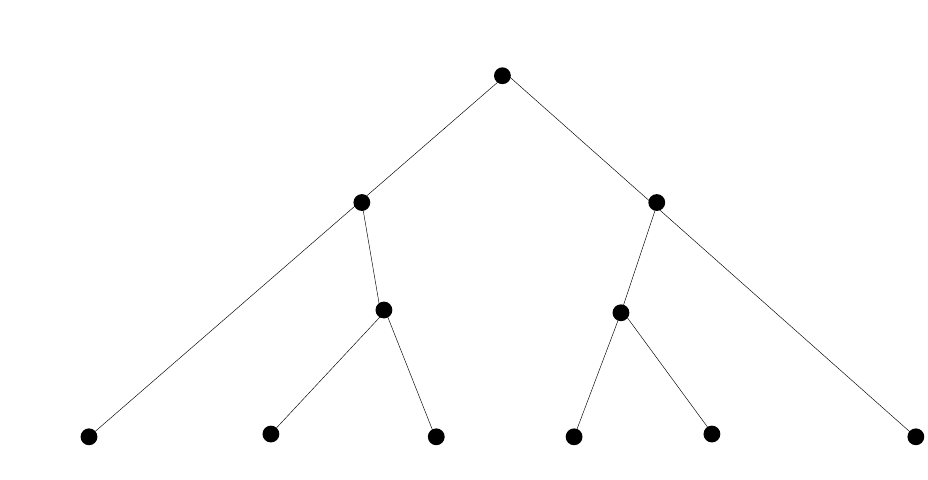_t}
  \caption{The PIP decomposition tree $T_{H,Q}$ for $m=n=3$.}\label{Tree}
\end{figure}

Now that the generic nodes are determined, we use Propositions \ref{1D},\ref{Lin} to compute the interpolating polynomial as follows: 
Consider $T_{H,Q}$ and start at the leaf $v \in L_{m,n}$ with $\sigma(v)=(1,n)$.  Solve the 1-dimensional PIP 
with respect to $f$ and $P_v$. Then, proceed from left to right to the next leaf $u$ and solve the 1-dimensional or linear PIP with respect to the corresponding corrected version $\hat f_u$ of $f$ according to Theorem \ref{SV}.
When done with all leaves, compute the global solution by following Theorem \ref{SV} in combining the leaf sub-solutions. Algorithm \ref{A2} formalizes this  procedure.

\begin{ex} We continue Example \ref{AS} by computing the interpolating polynomial for the generic nodes determined therein. Starting in $v_1$ with $\sigma(v_1)=(1,3)$, we compute a generic set of nodes $P_{v_1}$ by following Proposition \ref{1D} and the solution $Q_{1,1}$
of the $1$-dimensional PIP with respect to $f$ and $P_{v_1}$. Then we go to the next leaf $v_2$ with $\sigma(v_2)=(1,2)$ and compute $P_{v_2}$ and $Q_{1,0,1}$ with respect to $\hat f= (f - Q_{1,1})/Q_{H_{1}}$ and $P_{v_2}$.
We proceed to $v_3$ with $\sigma(v_3)=(2,1)$ compute linear generic nodes $P_{v_3}$ and $Q_{1,0,0}$ with respect to $\hat f = (f - Q_{1,1}-Q_{1,0,1}\cdot Q_{H_{1,1}})/(Q_{H_{1}}\cdot Q_{H_{1,1}})$.
Then, we can compute $Q_1= Q_{1,1} + Q_{H_{1,1}}Q_{1,0}$, $Q_{1,0}= Q_{1,0,1} + Q_{H_{1,0,1}}Q_{1,0,0}$ and proceed to $v_4$ with $\sigma(v_4) =  (1,2)$, solving the $1$-dimensional PIP with respect to 
\begin{align*}
 \hat f = &(f -Q_1)/Q_{H_1} \\
 =& (f - Q_{1,1}-Q_{1,0,1}\cdot Q_{H_{1,1}} - Q_{1,0,0}\cdot Q_{H_{1,1}}\cdot Q_{H_{1,0,1}})/Q_{H_{1}}.
\end{align*}
We solve the PIP for the remaining leaves analogously, observing that finally 
\begin{eqnarray*}
 Q &=& Q_1  + Q_{H_1}Q_0 = Q_{1,1} + Q_{H_{1,1}}Q_{1,0} + Q_{H_1}(Q_{0,1}+Q_{H_{0,1}}Q_{0,0}) \\ 
   &=& Q_{1,1} + Q_{H_{1,1}}(Q_{1,0,1} + Q_{H_{1,0,1}}Q_{1,0,0}) + Q_{H_1}\big((Q_{0,1,1} + Q_{H_{0,1,1}}Q_{0,1,0}) + Q_{H_{0,1}}Q_{0,0}\big) \nonumber
\end{eqnarray*}
solves the PIP with respect to $f$ and $P$ on $\R^3$.   
\end{ex}

 \begin{remark}\label{Q} Note that the results of Section \ref{2D} enable us to stop the iteration of Algorithms \ref{A1}, \ref{P}, \ref{A2} 
whenever we drop into a $2$-dimensional sub-problem. As outlined in Section \ref{2D} and Example \ref{3D},  this case can also be directly solved by generating a generic sets of nodes and inverting the corresponding small Vandermonde matrix 
$V_{2,k} \in \R^{K\times K}$, $K=(k+1)(k+2)$. 
 \end{remark}

 \section{Numerical Experiments} \label{EX}
 We verify and illustrate some of the findings of the previous sections in numerical experiments. 
 For this, we use a prototype MATLAB (version: R2015b (8.6.0.267246)) implementation of the algorithms described above running on an Apple MacBook Pro (Retina, 15-inch, Mid 2015)
 with a 2.2\,GHz Intel Core i7 processor and 16\,GB 1600\,MHz DDR3 memory and operating system macOS Sierra (version 10.12.14.).

\begin{figure}[t!]
\begin{minipage}[t]{0.47\textwidth}
\vspace{-2.05cm}
  \includegraphics[scale=0.4]{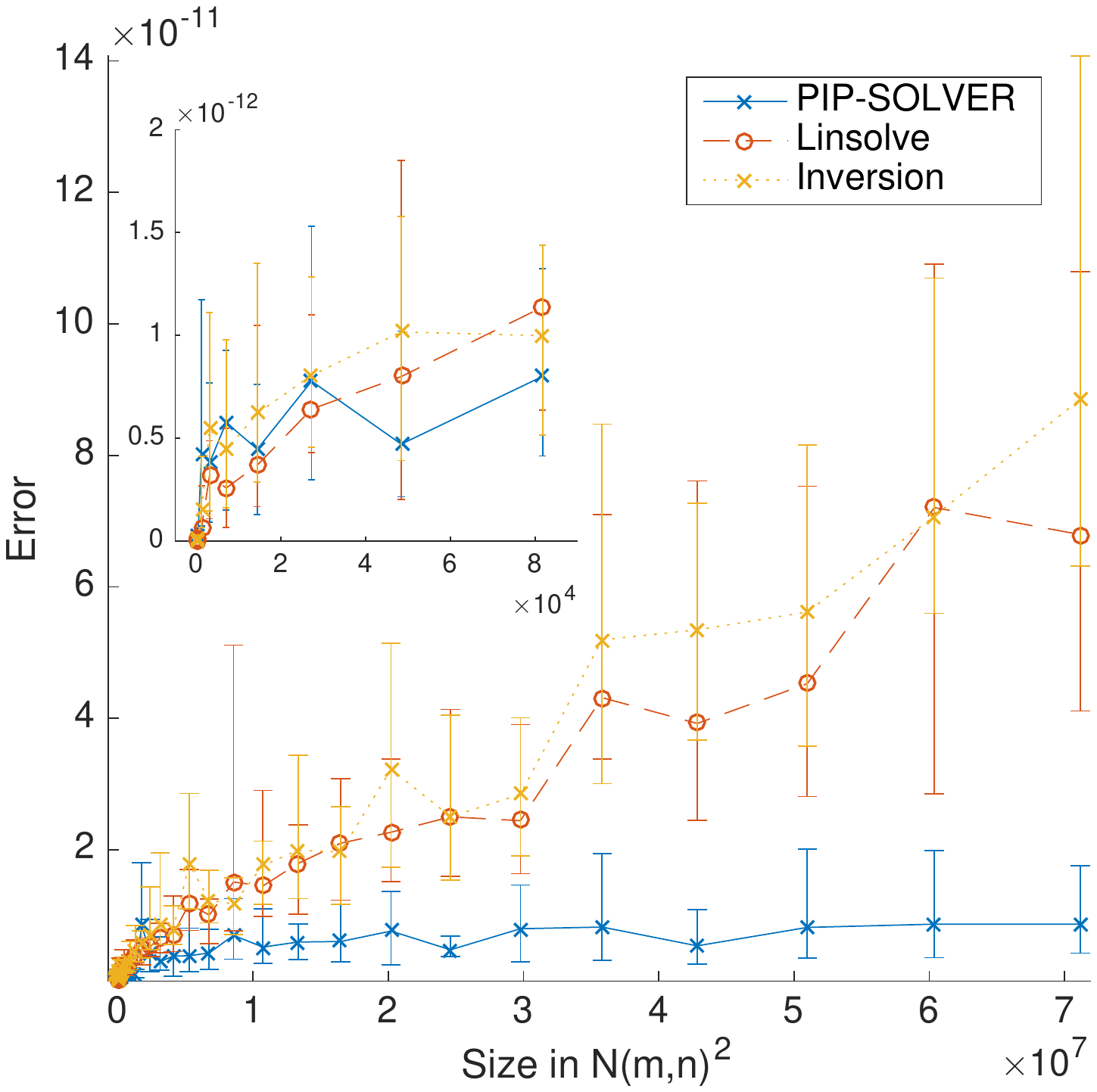}
  \vspace{-3.12cm}
  \caption{Accuracy of the three approaches.}\label{ACC}
 \end{minipage}
\begin{minipage}[t]{0.53\textwidth}
\vspace{-2.38cm}
  \includegraphics[scale=0.4]{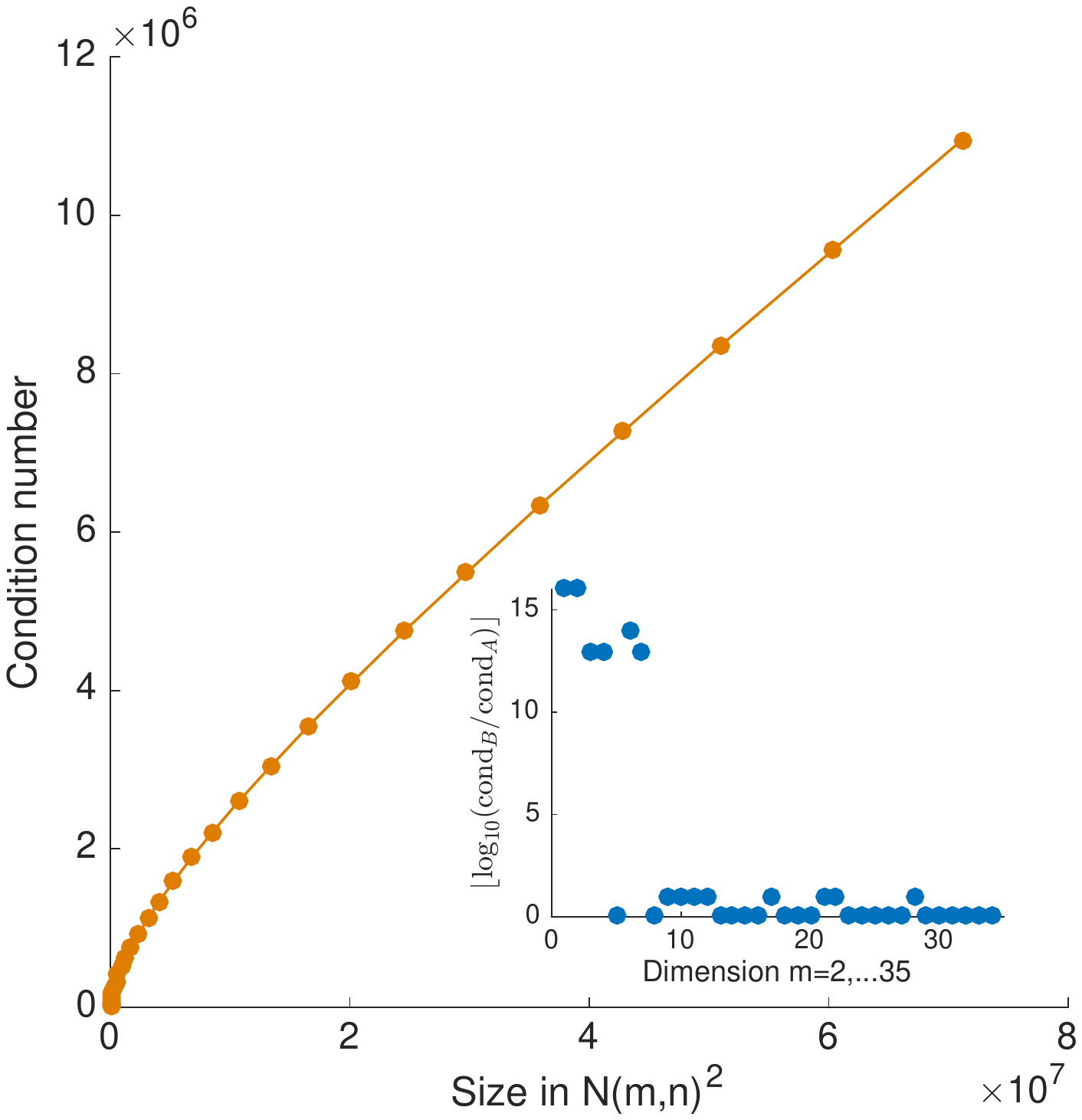}
  \vspace{-2.8cm}
  \caption{Condition of the Vandermonde matrices}\label{condi}
 \end{minipage}
\end{figure}

 
 For given $m,n \in \N$ and function $f: \R^m \lo \R$,
 we compare our approach with the previously used methods. Since there is no general method for finding generic nodes for arbitrary PIP problems, 
 and in order to guarantee comparability of results, we always use our approach for finding the nodes, but then consider three different approaches to solve for the interpolating polynomial on those same nodes: 
 \begin{enumerate}
  \item [i)] Our solution described in the previous section, which we call \emph{PIP-SOLVER}. 
  \item[ii)] Finding generic nodes $P=\{p_1,\dots,p_{N}\}$, $N=N(m,n)$ using our approach, but then solving the PIP by generating the Vandermonde matrix $V_{m,n}(P)$ and using the MATLAB linear solver, which uses an $LU$-decomposition
  to compute
  $C \in \R^N$ with $V_{m,n}(P)C=F$, $F= (f(p_1),\dots,f(p_N))^T$. We call this approach \emph{Linsolve}. 
    \item[iii)] Finding generic nodes $P=\{p_1,\dots,p_{N}\}$, $N=N(m,n)$ using our approach, but then solving the PIP by generating the Vandermonde matrix $V_{m,n}(P)$ and using MATLAB matrix inversion, which is a hybrid algorithm of modern
    matrix inversion approaches, to compute
  $C \in \R^N$ as $C=V_{m,n}(P)^{-1}F$, $F= (f(p_1),\dots,f(p_N))^T$. We call this approach \emph{Inversion}. 
  \end{enumerate}
  
\begin{experiment}
We first compare the accuracy of the three approaches, which also serves to validate our method. 
To do so, we choose uniformly-distributed random numbers $C=(c_0,\dots,c_{N-1}) \in [-1,1]^{N}$, $N=N(\mbox{m,n})$ 
to be the  coefficients of a polynomial $Q_f$ in $m$ variables and degree $\deg(Q_f)=n$. Then we generate a set of generic nodes $P_{m,n}$ and evaluate  
$F= (Q_f(p_1),\dots ,Q_f(p_N))^T$. Afterwards, we compute the solutions $C_k$, $k=1,2,3$ of the PIP with respect to these nodes and the three approaches $(i)$, $(ii)$, $(iii)$ above. 
Finally, we measure the numerical accuracy of each approach $k$ by computing $||C-C_k||_{l^\infty}$, $k=1,2,3$, 
the maximum absolute error in any coefficient. 
 \end{experiment}
 
Figure \ref{ACC} shows the average and min-max span of the errors (over 10 repetitions with different $i.i.d.$~random polynomials; same 10 polynomials for the three approaches) 
for fixed degree $n=3$ and dimensions $m=2,\dots,35$, plotted versus the quadratic size of the problem. The case $n=3$ is of high practical relevance, e.g., when interpolating cubic splines. High degrees $n>5$ are rarely used in practice, since they bear the risk of ringing artifacts. In the case tested here, all methods show high accuracy, which reflects the fact that our generic nodes are well chosen. 
 However, even though all approaches use the same ``good'' generic nodes, we observe a significant gain in 
  accuracy when using the present solver, in particular for large $N$. For small $N$, as shown in the inset for $m=2,\dots,8$, the present approach is as  accurate as the alternatives. The difference in accuracy becomes significant for $m\geq 10$.
We also observe that the error of the present \emph{PIP-SOLVER} approach plateaus and remains almost constant for higher dimensions, while the errors of the alternative approaches seem to increase quadratically with problem size.

Since the numerical accuracy of linear systems solvers and matrix inversion is dictated by the condition number of the Vandermonde matrix, we report these numbers in Figure \ref{condi}. Since the condition number of the Vandermonde matrix only depends on how the generic nodes are chosen, it is the same for all approaches and all random repetitions. It does, however, seem to grow quadratically with problem size, explaining the scaling of the error of the previous approaches.
 In contrast, the error of the \emph{PIP-SOLVER} approach is independent of the condition number of the Vandermonde matrix, because the approach never considers the global Vandermonde matrix. 
 It only works on the small sub-problems of {\em constant size} and {\em constant condition numbers}.
 
Since the condition number of the Vandermonde matrix depends on the set of generic nodes chosen, we also use it to assess the quality of the node sets generated by our approach. We do so by comparing with generic node sets obtained by randomly choosing a sufficiently large subset of nodes from the vertices of a regular Cartesian grid. We denote $\mathrm{cond}_A$ the condition number of the Vandermonde matrix using the generic nodes determined by our PIP-SOLVER and with $\mathrm{cond}_B$
 the maximum condition number over $20$ repetitions of randomly chosen subsets of grid points. Whenever $\mathrm{cond}_A\leq\mathrm{cond}_B$, the resulting log ratios are shown in the inset plot of Figure \ref{condi} 
 as $\log_{10} (\mathrm{cond}_B/\mathrm{cond}_A)$
 versus problem dimension $m=2,\ldots, 35$. However, the difference is less pronounced in higher dimensions, since almost all random node configurations are generic in high dimensions. 
 Though $\mathrm{cond}_A$ might not be the smallest possible condition number, we guarantee that $\mathrm{cond}_A\leq N(m,n)^2$, therefore leading to a well-conditioned Vandermonde matrix $V_{m,n}$, 
 especially for small instances. Note, however, that this is inconsequential for the numerical accuracy of our approach, since the PIP-SOLVER never inverts the Vandermonde matrix.

 \begin{experiment}
We compare the computational runtimes of the three approaches. 
Therefore, we choose uniformly-distributed random function values $F=(f_1,\dots,f_{N})\in [-1,1]^{N}$, $N=N(\mbox{m,n})$ as interpolation targets.
We then measure the time required to generate the generic nodes $P=p_1,\dots,p_{N(m,n)}$ and add the time taken 
by each approach to solve the PIP with respect to $f : \R^m \lo R$ with $f(p_i)=f_i$, $i =1,\dots,N(m,n)$.  
\end{experiment}
\begin{figure}[t!]
\begin{minipage}[t]{0.49\textwidth}
\vspace{-2.5cm}
  \includegraphics[scale=0.4]{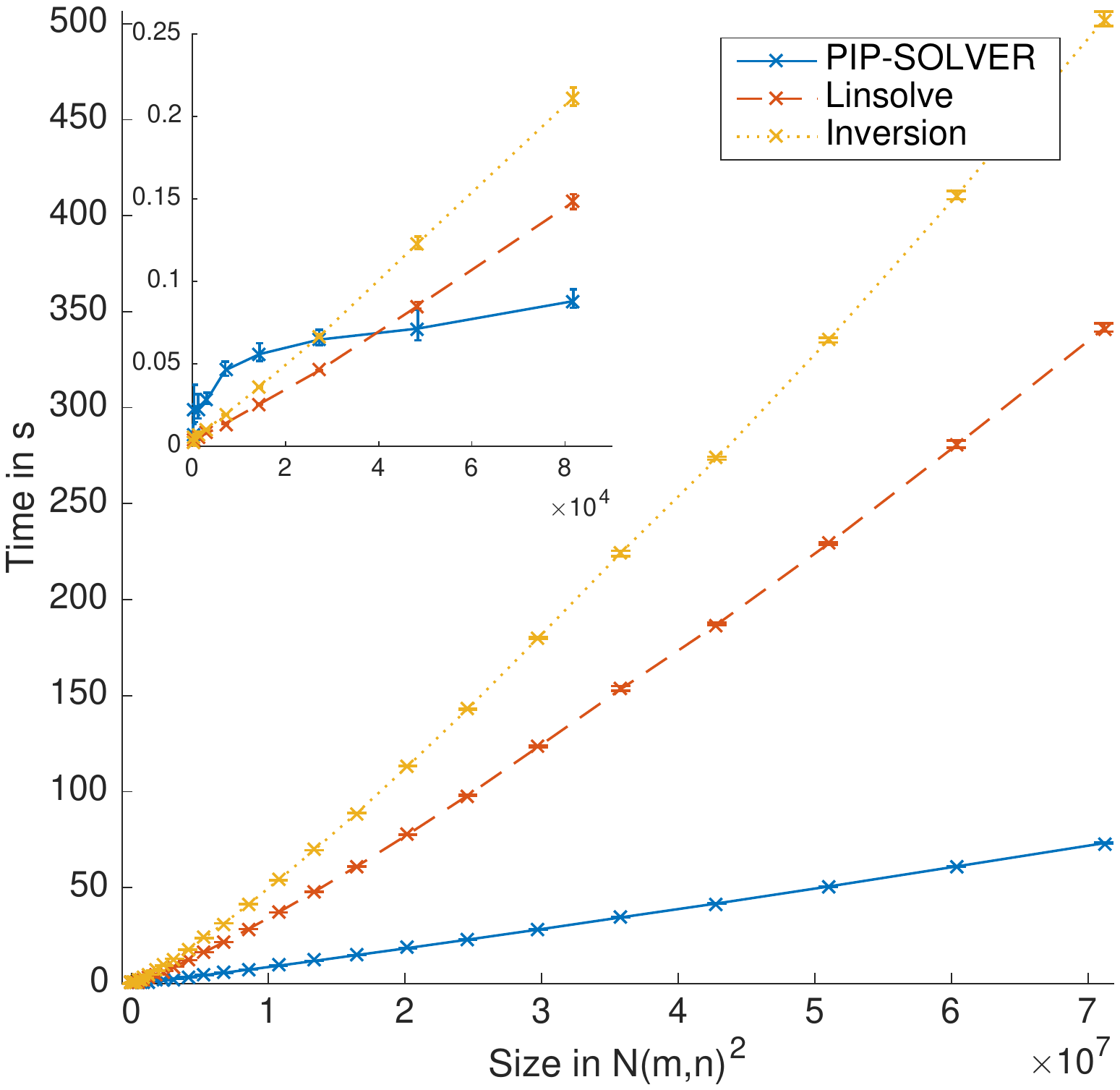}
  \vspace{-3cm}
  \caption{Runtimes for $n=3$, $m=2,\dots,35$  }\label{time1}
 \end{minipage}
\begin{minipage}[t]{0.51\textwidth}
\vspace{-2.5cm}
  \includegraphics[scale=0.4]{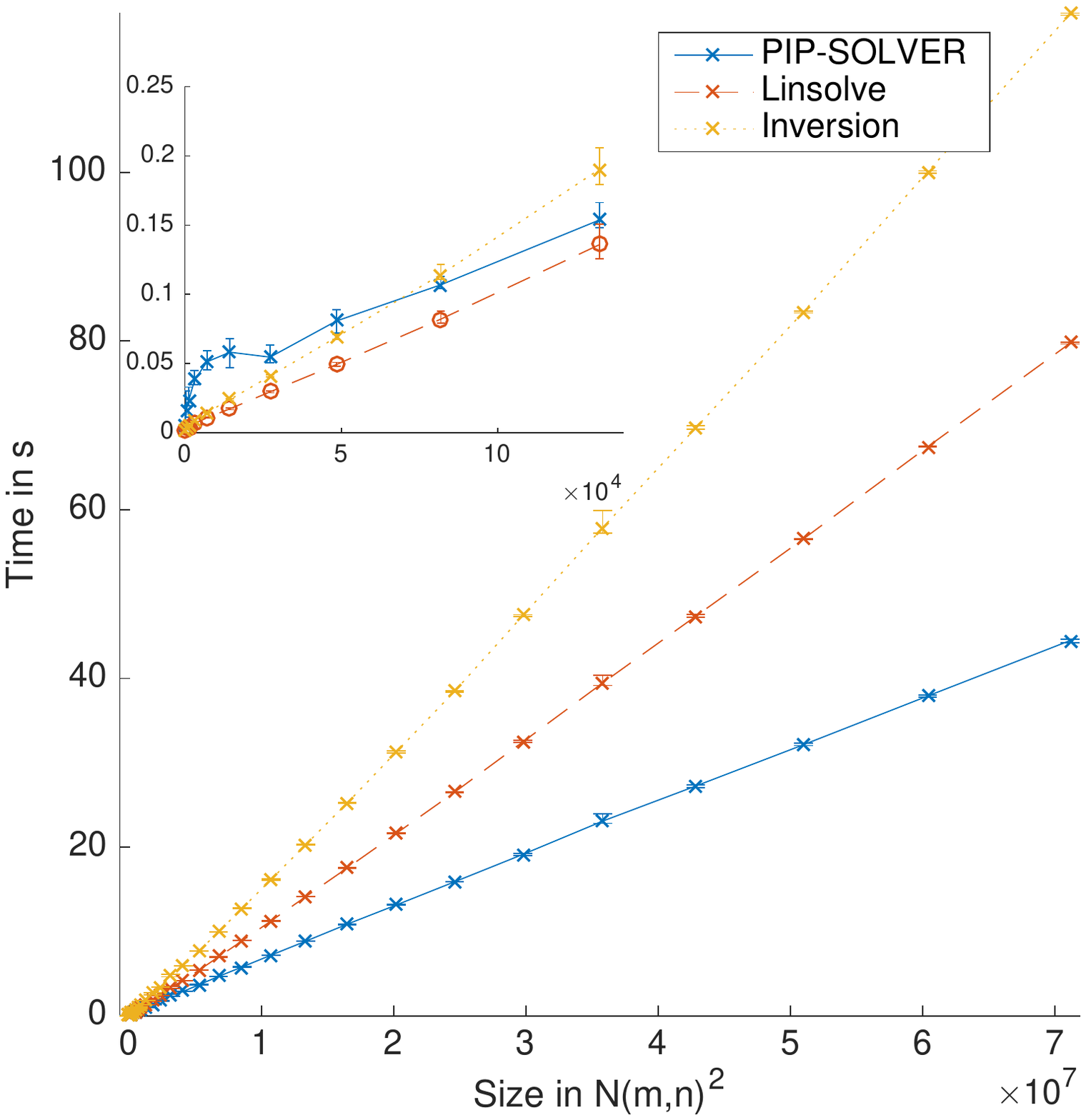}
  \vspace{-3cm}
    \caption{Runtimes for $m=3$, $n=2,\dots,35$  }\label{time2}
 \end{minipage}
\end{figure}

The average and min-max span (over 10 repetitions with different $i.i.d.$~random function values) are shown in Figures \ref{time1} and \ref{time2} versus the square of the problem size. 
Because all repetitions use the same set of generic nodes, the min-max-span error bars are very small. 
The data clearly shown that the \emph{PIP-SOLVER} approach scales significantly better with problem size than the other approaches. 
For small problems, however, the overhead of the PIP-SOLVER may not be amortized and there is a cross-over (for our implementation and benchmark setup at $m=7$ and $n=9$) below which previous methods may be faster. 
This is shown in the inset plots for $m=2,\dots,9$ and $n =2,\dots,10$, respectively.
However, the absolute runtimes are below 0.1 seconds at the point of cross-over, which may not be too relevant in practice. 
Moreover, we expect the cross-over to shift to lower $N$ once a properly optimized implementation of the present approach is available.

The scaling of the computational cost is reported in Table \ref{Tab}, where we fit all curves  with an \emph{R-square} of 1. For fixed dimension and fixed degree we observe that the exponent of our approach is significantly smaller than the exponents of \emph{Linsolve} and \emph{Inversion}, while the pre-factors do not significantly differ.  

However, for fixed degree $n=3$, we observe that the scaling exponent 
$q>2$ for the \emph{PIP-SOLVER}, although we have proven a quadratic time behavior in Theorem \ref{mainT}. 
This is likely due to the fact that our prototype MATLAB implementation is not optimal. We expect that an optimized implementation of the \emph{PIP-SOLVER} in a natively compiled programming 
language is going to reduce the exponent. However, the results here confirm that 
the PIP-SOLVER scales better than any previous approach.

In addition to having a lower time complexity, PIP-SOLVER also requires less memory and has a lower space complexity. Indeed, the PIP-SOLVER requires only $\Oc(mN(m,n))$ storage, 
whereas both previous approaches require $\Oc(N(m,n)^2)$ storage to hold the Vandermonde matrix. Due to this lower space complexity, we could solve the PIP for large instances, i.e., $N(m,n)\geq 10^5$ 
within reasonable time (hours), while both previous approaches failed to solve such large problems due to insufficient memory on the computer used for the experiment.

\section{Applications}\label{APP} We would like to highlight some potential applications of the present PIP-SOLVER in scientific computing and computational science. However, this list is, by no means exhaustive, as PIPs are a fundamental component of many numerical methods. The following applications may not be obvious, though:  

{\bf A1)} Given $m,n\in \N$, $m\geq 1$ and a function $f : \R^m \lo \R$. It is classical in numerical analysis to determine the integral $\int_{\Omega}f \,\mathrm{d}\Omega$, $\Omega \subseteq \R^m$  
and the  derivative  $\partial_vf(x) $ of $f$ in direction $v \in \R^m$ at $x \in \R^m$. Solving the PIP, these desired quantities can easily be computed for the interpolation polynomial $Q$ of $f$ evaluated at some collocation points, converging to 
to the quantities of $f$ with increasing degree $\deg(Q_f)=n$. A comparison with other approaches from numerical analysis would therefore be interesting. 

\begin{table}[t!]
 {\renewcommand{\arraystretch}{1.2}
\begin{center}
{\footnotesize
\begin{tabular}[t]{| l | l | l | l |} 
\hline 
Algorithm & $m,n$ intervals &  Pre-factor $p$ & Exponent $q$\\ 
 \hline

 \emph{PIP-SOLVER} &$m =2,\dots,35 $, $n=3$&          $p = 2.7624\cdot 10^{-7}$ &  $q=2.1427  $ \\
 \hline
  \emph{Linsolve} & $m =2,\dots,35 $, $n=3$&         $p=2.4846\cdot 10^{-7}$ & $q=2.3262 $ \\ 
 \hline
  \emph{Inversion} &$m =2,\dots,35 $, $n=3$ & $p=3.4421\cdot 10^{-7}$ &   $q=2.3322 $ \\ 
 \hline
 &  &         &  \\
 \hline
  \emph{PIP-SOLVER} & $m =3$, $n= 2,\dots,35$&         $p = 2.1652\cdot 10^{-6}$ &  $q=1.8588  $ \\
 \hline
  \emph{Linsolve} & $m=3$, $n =2,\dots,35 $&     $p=7.2797\cdot 10^{-7}$ & $q=2.0467 $ \\ 
 \hline
  \emph{Inversion} & $m=3$,  $n =2,\dots,35 $&       $p=8.6115\cdot 10^{-7}$ &   $q=2.0711 $ \\ 
 \hline
\end{tabular} 
}
\vspace{0.2cm}
\end{center}
\caption{Scaling of the computational cost by fitting the polynomial model $px^q$.}}\label{Tab}
\end{table}

{\bf A2)} A maybe surprising application is found in cryptography. There, the PIP is used to "share a secret" by choosing a random polynomial $Q \in \Z[x]$ in dimension $m=1$. Knowing the values of $Q$ at $n+1$ different nodes 
enables one to determine \mbox{$Q(0) \!\!\! \mod p$} for some 
large prime number $p \in \N$. However, knowing only $n$ "keys" prevents one for opening the "door".
Certainly, this method can be generalized to arbitrary dimensions and therefore our approach applies to this type of cryptography, see also \cite{Shamir}. In particular, 
the improved numerical accuracy of PIP-SOLVER would prevent the reconstructed message from being corrupted by numerical noise.

{\bf A3)} \emph{Gradient descent} over multivariate functions is often used to solve non-convex optimization problems, where $\Omega \subseteq \mathbb{R}^m$ models the space of possible solutions for a given problem 
and $f: \R^m \lo \R$ is interpreted as an objective function. Thus, one wants to minimize $f$ on $\Omega$.   
Often, the function $f$ is not explicitly known $\forall x \in \Omega$, but can be evaluated point-wise.
If \mbox{$m \gg 1$}, an interpolation of $f$ on $\Omega$
is often not possible or not considered so far. 
  
Our approach allows interpolating $f$ even for \mbox{$m \gg 1$}.  Then, we can sample through the convex cones of the interpolation polynomial $Q_f$ and apply the classical \emph{Newton-Raphson method} to find the corresponding local optima. 
Alternatively, one might be able to solve the global flow equation $\dot x(t)=-\nabla f(x(t))$, $x(0)=x_0$. In the generic situation, i.e., for almost all $x_0$ we have that $x(+\infty)$ is a local minimum. 
Thus, by sampling over the initial conditions, the global optimum of the interpolation polynomial might be found, approximating the global optimum of $f$.  
We expect that the accuracy and runtime performance of the present PIP-SOLVER can be used to improve gradient-based optimization algorithms. 

\section{Discussion and Conclusions}

In closing, we summarize our results in the literature context, sketch possible generalizations, and list remaining open problems. 

\label{Conc}
\subsection{Summary} An excellent survey  of approaches and contributions to the PIP is given in \cite{Gasca2000}. The idea of adapting B\'{e}zout's Theorem in order to decompose the PIP w.r.t.~$m,n \in \N$ into 
two sub-problems of dimension and degree \mbox{$(m-1,n)$}, \mbox{$(m,n-1)$}, respectively, has already been 
mentioned in \cite{Guenther,2000}. In \cite{Gasca}, the cases $m=2,3$ were treated explicitly and a generalization to arbitrary dimension was sketched. Due to these approaches and others \cite{Gasca2000},
some characterizations of generic nodes in arbitrary dimensions could be given.   
However, the problem of computing the interpolation polynomial $Q$ efficiently and accurately remained unsolved. Indeed, all previous decomposition approaches were limited to bounded dimension and to nodes on 
pre-defined grids or meshes \cite{Bos,Erb,Gasca2000,Chung}. 

To our knowledge, this work is the first to present a decomposition of the general PIP into sub-problems that can accurately and robustly be solved without any pre-defined grid or mesh.
We also believe that this work is the first to show how the numerical or analytical solutions of the sub-problems can be combined to a full solution of the original PIP.
As verified in Section \ref{EX}, our algorithm PIP-SOLVER provides  
significant improvements in both accuracy and runtime when compared  
with other general approaches based on linear systems solvers or matrix inversion. Importantly, the improvements are not in the pre-factor, but in how the accuracy and performance scale with problem size. 
This makes it possible to address problems of unprecedented size at constant numerical accuracy. We expect that the runtime  is further reduced by an optimized implementation of the PIP-SOLVER in a 
compiled programming language. 
Thus, as long as the generic nodes $P$ can be freely chosen in $\R^m$, we have presented an efficiently computable procedure for solving general PIPs.  
The runtime, memory requirement, and numerical accuracy of the resulting algorithm improve over the previous state of the art. 
In particular, we overcome the theoretical lower bound on matrix inversion of $\Oc(N^2\log(N))$, $N=N(m,n)$, \cite{tveit}. 

\subsection{Generalizations}\label{GEN}
We foresee the possibility of generalizing the characterization of generic sets from dimension 2 to arbitrary dimensions by 
considering intersections of algebraic varieties instead of algebraic curves. This could lead to finding a similar estimate for the number of intersection points as given by B\'{e}zout's Theorem.
Furthermore, the decomposition in Theorems \ref{GN} and \ref{SV} can possibly be generalized by replacing hyperplanes $H_\ee$ by arbitrary hyper-surfaces of $1\leq \deg(H_\ee)\leq n$. 

Algorithmically, a hybrid approach could be implemented that combines the present decomposition scheme with directly inverting the Vandermonde matrix for small sub-problems, i.e., for $N \leq 5\ldots 8 \cdot 10^4$, corresponding to the crossover points seen in Figures \ref{time1}, \ref{time2}.
Moreover, the PIP can also be understood with respect to other basis functions, such as \emph{Chebyshev polynomials}  or \emph{Fourier bases}. The present approach should also work there, possibly enabling the implementation of 
another \emph{Fast Multivariate Fourier Transform} that improves the runtime behavior of current approaches. 
Moreover, over-fitting problems could be addressed by applying the present approach to \emph{spline interpolation} and \emph{Hermite interpolation}.

From a software-engineering viewpoint, a distributed-memory parallelized version of the algorithm could be implemented in order to further reduce runtimes and alleviate memory limitations. This is possible since our approach yields a decomposition into independent sub-problems that can be processed in parallel, using inter-process communication to ensure correct decomposition of the problem and synthesis of the final solution.

\subsection{Remaining Problems}
Several remaining problems can be identified pertaining to the details of how the function $f$ is given. 
Frequently, for example, the function $f: \R^m\lo \R$, $m \in \N$ is not known for every $x \in \R^m$, but only on a given set of collocation points $\Pc \subseteq \R^m$. 
The problem remains to adapt the decomposition of the PIP in a way that allows choosing generic nodes $P \subseteq \Pc$ for a given, fixed $\Pc$. 
In cases where the points $\Pc$ sit on certain lattices or grids, this is easily achieved by choosing the hyperplanes $H_\ee \subseteq \R^m$ 
such that they include the corresponding sub-lattice or sub-grid.  
If the nodes $\Pc$ are arbitrarily distributed, finding the right PIP decomposition remains an open problem. 
The results of Section \ref{2D} provide some freedom to adapt the decomposition by choosing admissible solutions in dimension 2 in cases where a  decomposition into $1$-dimensional sub-problems is not feasible.
In addition, generalizing the approach from hyperplanes to hyper-surface, as sketched in Section \ref{GEN}, might provide sufficiently rich configurations for which one can guarantee that
$\Pc \cap H_\ee $
is sufficient large to generate generic node sets for the corresponding sub-problem. 

With respect to the numerical accuracy of the solver, it remains an open problem to find the optimal way of decomposing the PIP with respect to the requirements of Theorems \ref{GN},\ref{SV} such that the condition numbers of the resulting sub-problems are the smallest possible, and the numerical accuracy is maximized. Theoretical proofs of optimality would be most useful, also in designing the optimal algorithm.

Taken together, the present work highlights a number of follow-up problems that may eventually lead to a full and complete understanding of general PIPs.


\section*{Acknowledgments} We thank Rajesh Ramaswamy for inspiring discussions that have motivated us to study this problem.


\begin{thebibliography}{1}

\bibitem{Alg}
{\sc C.~Carstensen, B.~Fine, and G.~Rosenberger}, {\em Abstract Algebra.
  Applications to Galois Theory, Algebraic Geometry and Cryptography}, De
  Gruyter, Berlin, Boston, 2011.

\bibitem{fulton}
{\sc W.~Fulton}, {\em Algebraic Curves}, Mathematics Lecture Note Series, 1974.

\bibitem{LIP}
{\sc E.~Meijering}, {\em A chronology of interpolation: {F}rom ancient
  astronomy to modern signal and image processing}, Proceedings of the {IEEE},
  90 (2002), pp.~319--342.

\bibitem{MultiVander}
{\sc P.~J. Oliver}, {\em On multivariate interpolation}, School of Mathematics,
  University of Minnesota,  (2009).

\bibitem{NA2}
{\sc M.~Schatzman}, {\em Numerical Analysis: A Mathematical Introduction},
  Clarendon Press, Oxford, 2002.

\bibitem{Schrader}
{\sc B.~Schrader, S.~Reboux, and I.~F. Sbalzarini}, {\em Discretization
  correction of general integral pse operators for particle methods}, J.
  Comput. Phys., 229 (2010), pp.~4159--4182,
  \href{http://dx.doi.org/10.1016/j.jcp.2010.02.004}
  {doi:10.1016/j.jcp.2010.02.004},
  \url{http://dx.doi.org/10.1016/j.jcp.2010.02.004}.

\bibitem{smale}
{\sc S.~Smale}, {\em An infinite dimensional version of {S}ard's theorem},
  Amer. J. Math., 87 (1965), pp.~861--866.

\bibitem{Na}
{\sc J.~Stoer, R.~Bulirsch, R.~H. Bartels, W.~Gautschi, and C.~Witzgall}, {\em
  Introduction to numerical analysis}, Texts in applied mathematics, Springer,
  New York, 2002, \url{http://opac.inria.fr/record=b1098819}.

\end{thebibliography}


\begin{thebibliography}{10}

\bibitem{Bos}
{\sc L.~Bos, S.~D. Marchi, and M.~Vianello}, {\em Polynomial approximation on
  \textrm{ Lissajous} curves in the d-cube}, arXiv:1502.04114,  (2015).

\bibitem{burden}
{\sc R.~L. Burden and J.~D. Faires}, {\em Numerical analysis}, PWS, Boston,
  (1993).

\bibitem{Alg}
{\sc C.~Carstensen, B.~Fine, and G.~Rosenberger}, {\em Abstract Algebra.
  Applications to Galois Theory, Algebraic Geometry and Cryptography}, De
  Gruyter, Berlin, Boston, 2011.

\bibitem{COPPER}
{\sc D.~Coppersmith and S.~Winograd}, {\em Matrix multiplication via arithmetic
  progressions}, Journal of Symbolic Computation, 9 (1990), pp.~251 -- 280,
  \href{http://dx.doi.org/http://dx.doi.org/10.1016/S0747-7171(08)80013-2}
  {doi:http://dx.doi.org/10.1016/S0747-7171(08)80013-2},
  \url{http://www.sciencedirect.com/science/article/pii/S0747717108800132}.

\bibitem{Erb}
{\sc W.~Erb, C.~Kaethner, P.~Denker, and M.~Ahlborg}, {\em A survey on
  bivariate \textrm{ Lagrange} interpolation on \textrm{Lissajous} nodes},
  Dolomites Research Notes on Approximation, 8, pp.~23--36,
  \url{http://dx.doi.org/10.14658/pupj-drna-2015-Special_Issue-4}.

\bibitem{fulton}
{\sc W.~Fulton}, {\em Algebraic Curves}, Mathematics Lecture Note Series, 1974.

\bibitem{FAST}
{\sc F.~L. Gall}, {\em Powers of tensors and fast matrix multiplication}, CoRR,
  abs/1401.7714 (2014), \url{http://arxiv.org/abs/1401.7714}.

\bibitem{Gasca}
{\sc M.~Gasca and J.~I. Maeztu}, {\em On lagrange and hermite interpolation in
  $\mathbb{R}^k$}, Numerische Mathematik, 39 (1982), pp.~1--14,
  \href{http://dx.doi.org/10.1007/BF01399308} {doi:10.1007/BF01399308},
  \url{http://dx.doi.org/10.1007/BF01399308}.

\bibitem{Gasca2000}
{\sc M.~Gasca and T.~Sauer}, {\em Polynomial interpolation in several
  variables}, Advances in Computational Mathematics, 12 (2000), p.~377,
  \href{http://dx.doi.org/10.1023/A:1018981505752}
  {doi:10.1023/A:1018981505752},
  \url{http://dx.doi.org/10.1023/A:1018981505752}.

\bibitem{Guenther}
{\sc R.~B. Guenther and E.~L. Roetman}, {\em Some observations on interpolation
  in higher dimensions}, Math. Comp. 24 (), 517-522, 24 (1970), pp.~517--522.

\bibitem{Chung}
{\sc T.~H.~Y. K.~C.~Chung}, {\em On lattices admitting unique lagrange
  interpolations}, SIAM Journal on Numerical Analysis, 14 (1977), pp.~735--743,
  \url{http://dx.doi.org/10.14658/pupj-drna-2015-Special_Issue-4}.

\bibitem{2000}
{\sc A.~Le~M{\'e}haut{\'e}}, {\em On some aspects of multivariate polynomial
  interpolation}, Advances in Computational Mathematics, 12 (2000),
  pp.~311--333, \href{http://dx.doi.org/10.1023/A:1018985606661}
  {doi:10.1023/A:1018985606661},
  \url{http://dx.doi.org/10.1023/A:1018985606661}.

\bibitem{LIP}
{\sc E.~Meijering}, {\em A chronology of interpolation: {F}rom ancient
  astronomy to modern signal and image processing}, Proceedings of the {IEEE},
  90 (2002), pp.~319--342.

\bibitem{MultiVander}
{\sc P.~J. Oliver}, {\em On multivariate interpolation}, School of Mathematics,
  University of Minnesota,  (2009).

\bibitem{Preiss:1998}
{\sc B.~R. Preiss}, {\em Data Structures and Algorithms with Object-oriented
  Design Patterns in C++}, John Wiley \& Sons, Inc., New York, NY, USA, 1999.

\bibitem{robinson}
{\sc S.~Robinson}, {\em Toward an optimal algorithm for matrix multiplication},
  SIAM news, 38 (2005), pp.~1--3.

\bibitem{NA2}
{\sc M.~Schatzman}, {\em Numerical Analysis: A Mathematical Introduction},
  Clarendon Press, Oxford, 2002.

\bibitem{Shamir}
{\sc A.~Shamir}, {\em How to share a secret}, Commun. ACM, 22 (1979),
  pp.~612--613, \href{http://dx.doi.org/10.1145/359168.359176}
  {doi:10.1145/359168.359176}, \url{http://doi.acm.org/10.1145/359168.359176}.

\bibitem{smale}
{\sc S.~Smale}, {\em An infinite dimensional version of {S}ard's theorem},
  Amer. J. Math., 87 (1965), pp.~861--866.

\bibitem{Stewart}
{\sc G.~Stewart}, {\em Afternotes on Numerical Analysis}, Society for
  Industrial and Applied Mathematics, 1996,
  \href{http://dx.doi.org/10.1137/1.9781611971491}
  {doi:10.1137/1.9781611971491},
  \url{http://epubs.siam.org/doi/abs/10.1137/1.9781611971491},
  \href{http://arxiv.org/abs/http://epubs.siam.org/doi/pdf/10.1137/1.9781611971491}
  {arXiv:http://epubs.siam.org/doi/pdf/10.1137/1.9781611971491}.

\bibitem{Na}
{\sc J.~Stoer, R.~Bulirsch, R.~H. Bartels, W.~Gautschi, and C.~Witzgall}, {\em
  Introduction to numerical analysis}, Texts in applied mathematics, Springer,
  New York, 2002, \url{http://opac.inria.fr/record=b1098819}.

\bibitem{strassen}
{\sc V.~Strassen}, {\em Gaussian elimination is not optimal}, Numerische
  mathematik, 13 (1969), pp.~354--356.

\bibitem{NASA}
{\sc L.~R. Turner}, {\em Inverse of vandermonde matrix with applications}, NASA
  Scientific and technical publications,  (1996).

\bibitem{tveit}
{\sc A.~Tveit}, {\em On the complexity of matrix inversion}, Mathematical Note,
   (2003), p.~1.

\end{thebibliography}

\end{document}